\DeclarePairedDelimiter\abs{\lvert}{\rvert}
\let\oldabs\abs
\def\abs{\@ifstar{\oldabs}{\oldabs*}}
\DeclareMathOperator*{\argmin}{arg\,min} % thin space, limits underneath in displays
\DeclareMathOperator*{\argmax}{arg\,max} % thin space, limits underneath in displays
\DeclareMathAlphabet\mathbfcal{OMS}{cmsy}{b}{n}
\theoremstyle{thmstyleone}%
\newtheorem{theorem}{Theorem}%  meant for continuous numbers
\newtheorem{proposition}[theorem]{Proposition}% 
\theoremstyle{thmstyletwo}%
\newtheorem{example}{Example}%
\newtheorem{remark}{Remark}%
\theoremstyle{thmstylethree}%
\newenvironment{examplecont}[1]
{\example}
{\endexample}
\pgfplotsset{compat=1.18}
\begin{document}

\title[On \texorpdfstring{$\lambda$}{}-Cent-Dians and Generalized-Center for Network Design]{On \texorpdfstring{$\lambda$}{}-Cent-Dians and Generalized-Center for Network Design: Definitions and Properties}

%%=============================================================%%
%% GivenName	-> \fnm{Joergen W.}
%% Particle	-> \spfx{van der} -> surname prefix
%% FamilyName	-> \sur{Ploeg}
%% Suffix	-> \sfx{IV}
%% \author*[1,2]{\fnm{Joergen W.} \spfx{van der} \sur{Ploeg} 
%%  \sfx{IV}}\email{iauthor@gmail.com}
%%=============================================================%%

\author[1,2]{\fnm{V\'ictor} \sur{Bucarey}}\email{victor.bucarey@uoh.cl}
\equalcont{These authors contributed equally to this work.}

\author*[3]{\fnm{Natividad} \sur{Gonz\'alez-Blanco}}\email{ngonzalez@uloyola.es}

\author[4,5]{\fnm{Martine} \sur{Labb\'e}}\email{mlabbe@ulb.ac.be}
\equalcont{These authors contributed equally to this work.}

\author[6,7]{\fnm{Juan A.} \sur{Mesa}}\email{jmesa@us.es}
\equalcont{These authors contributed equally to this work.}

\affil[1]{\orgdiv{Institute of Engineering Sciences}, \orgname{Universidad de O'Higgins}, \orgaddress{\city{Rancagua}, \country{Chile}}}

\affil[2]{\orgdiv{Instituto Sistemas Complejos de Ingenier\'ia (ISCI)}, \orgaddress{\city{Santiago Centro}, \country{Chile}}}

\affil[3]{\orgname{Universidad Loyola Andalucía}, \orgdiv{Departamento de M\'etodos Cuantitativos}, \orgaddress{\city{Dos Hermanas}, \country{Spain}}}

\affil[4]{\orgdiv{D\'epartement d'Informatique}, \orgname{Universit\'e Libre de Bruxelles}, \orgaddress{\city{Brussels}, \country{Belgium}}}

\affil[5]{\orgdiv{Inria Lille-Nord Europe}, \orgaddress{ \city{Villeneuve d'Ascq}, \country{France}}}

\affil[6]{\orgdiv{Departamento de Matem\'atica Aplicada II}, \orgname{Universidad de Sevilla}, \orgaddress{\city{Sevilla}, \country{Spain}}}

\affil[7]{\orgdiv{Instituto de Matem\'aticas de la Universidad de Sevilla}, \orgname{Universidad de Sevilla}, \orgaddress{\city{Sevilla}, \country{Spain}}}

%%==================================%%
%% Sample for unstructured abstract %%
%%==================================%%

\abstract{In this paper, we extend the notions of $\lambda$-cent-dian and generalized-center from Facility Location Theory to the more intricate domain of Network Design. Our focus is on the task of designing a sub-network within a given underlying network while adhering to a budget constraint. This sub-network is intended to efficiently serve a collection of origin/destination pairs of demand. 
	
The $\lambda$-cent-dian problem studies the balance between efficiency and equity. We investigate the properties of the $\lambda$-cent-dian and generalized-center solution networks under the lens of equity, efficiency, and Pareto-optimality. We finally prove that the problems solved here are NP-hard.
}

\keywords{$\lambda$-Cent-Dian Problem, Generalized-Center Problem, Network Design, Pareto-optimality}

\maketitle

\section{Introduction}\label{sec1}

\textit{Center} and \textit{median} problems in graphs and Euclidean spaces constituted the core of Location Science in the late 50s and 60s of the past century. Whereas median problems aim at maximizing the system's efficiency, center ones try to maximize the effectiveness or equity. In the median problem, the objective is to find one or more facility locations (points) in a given space, such that the normalized sum of weighted distances from the demand points to their closest facility is minimized. In contrast, the center problem seeks to find one or more facility locations to minimize the largest distance from a demand point to the nearest facility. 

On the one hand, the median problem is well-suited for situations where the primary objective is cost minimization or profit maximization within the system. On the other hand, the center problem is ideal for scenarios where the goal is to ensure that the farthest point is as close as possible to the facility, as in locating emergency facilities. However, when taken separately, these two objectives do not adequately address many real-world problems that require balancing efficiency and equity.

In \cite{halpern1976location}, the term $\lambda$-cent-dian was first introduced for location problems whose objective is to minimize a linear convex combination of center and median objectives, denoted by $F_c$ and $F_m$ respectively, i.e. the $\lambda$-cent-dian objective is $H_{\lambda} = \lambda F_c + (1-\lambda)F_m$. Subsequently, in \cite{halpern1978finding}, the author proved that the $\lambda$-cent-dian of a graph lies on a path connecting the center and the median, and provided a procedure to find the $\lambda$-cent-dians for all possible combinations given by the different values of $\lambda$. For a network $\mathcal{N} = (N,E)$, an $O(|N||E|\log(|N||E|))$ time algorithm to find all the $\lambda$-cent-dian points is proposed by \cite{hansen1991median}. Moreover, they also introduced the concept of \textit{generalized-center} as the minimizer of the difference function between the center and the median. In fact, when $\lambda \rightarrow \infty$, the ratio $\frac{H_{\lambda}}{\lambda}$ tends to the generalized-center. 
The generalized-center is an objective that favors the equity between O/D pairs. However, it could lead to inefficient solutions as noted by \cite{ogryczak1997centdians}. An axiomatic approach to the $\lambda$-cent-dian criterion was given in \cite{carrizosa1994axiomatic}.

The $\lambda$-cent-dian objective has also been considered in extensive facility location problems. A facility is called extensive if it is too large regarding its environment to be represented by isolated points. Examples of extensive facilities are paths, cycles, or trees on graphs and straight lines, circles, or hyperplanes in Euclidean spaces (see \cite{mesa1996review}, \cite{puerto2009extensions}).

The importance of the $\lambda$-cent-dian criterion allows weight, in some way, two contradicting criteria. Thus, the decision-maker chooses the weight to allocate to the center criterion and to the median criterion. To the best of our knowledge, we are the first to investigate and formalize the $\lambda$-cent-dian network design problem. In this context, our focus diverges from traditional facility location problems by identifying an optimal sub-network instead of single-point facilities and considering the demand given by origin-destination pairs (O/D pairs).

Network Design problems have been applied to several fields, especially in telecommunications and transportation systems (\cite{crainic2021network}). In some cases, the demand is not represented by individual points but rather by pairs of points (e.g., O/D pairs representing telephone calls), which produce flows that the network must manage. In transportation, the applications are diverse: air transportation, postal delivery systems, service networks, trucking, and transit systems. Often, there is only one network that handles the flows between origins and destinations. However, in some instances, more than one network is already in operation. In these situations, there is competition among the existing systems to capture demand. For instance, in mobile telephony, there is stiff competition among various providers. In urban and metropolitan mobility, a range of transportation modes, including private cars, bicycles, buses, and metros, compete for commuters' preferences.

Most Network Design problems primarily revolve around objective functions centered on either cost or profit. Given the inherent cost dependency on distance, these objective functions effectively serve as surrogates for the classical median problems, characterized by the summation of (weighted) distances to the facility. Furthermore, certain scenarios require additional considerations, where the proximity of origins and destinations plays a key role. For instance, in densely populated metropolitan areas, the daily commute of individuals to their workplaces necessitates minimal travel time, reflecting their reluctance to endure extended daily journeys between their residences and workplaces. This concern over commuting time can be viewed as a proxy for distance. Another example is electricity distribution generated by (solar, hydro, or wind) mini-power facilities in rural areas. In these cases, the electricity is provided at low voltage, and power losses increase with the distance (see \cite{gokbayrak2022two}). Thus, in these contexts, the center objective must be considered, combined with the median one.

The contributions of this work are the following:
\begin{itemize}
	\item We extend the solution concepts from the Facility Location Theory of the $\lambda$-cent-dian and generalized-center to the more intricate area of Network Design. 
	In this setting, we study the $\lambda$-cent-dian concept by considering the demand as a set of origin/destination pairs to be connected through a solution network. Furthermore, this solution network must satisfy a budget constraint. 
	The $\lambda$-cent-dian concept examines the balance between efficiency and equity. Notably, the generalized-center aligns with the specific case of the $\lambda$-cent-dian concept when $\lambda \rightarrow \infty$, emphasizing the variance between the measures of efficiency and equity.
    \item We delve into the properties of the $\lambda$-cent-dian and generalized-center solution networks, exploring equity, efficiency, and Pareto-optimality concerning the bicriteria center/median problem. Similar to findings in Facility Location, we ascertain that the Pareto-optimality solution set in the Network Design context is not always fully derived from minimizing the $\lambda$-cent-dian function for $\lambda \in [0,1]$. Examples illustrating these concepts are provided throughout the article for clarity.
 \item We prove that the problems are $NP$-hard by reducing the p-median and p-center problems to the corresponding network design problems. 
 \item We exemplify the potentiality of the concepts applied to real situations. For that, we consider the design of a metro network and that the potential riders can access the network through the pedestrian mode. We consider an upper bound for the pedestrian time/distance and a penalization for the pedestrian branches to refer to the differences between both modes.
\end{itemize}

The structure of the paper is as follows.
In Section \ref{sec:problem_definition}, we extend different concepts involving the median and center functions from Location Science to the more complex area of Network Design and analyze and compare the different solutions. Then, in Section \ref{sec:compromise_cent_dian} we present the relationship between the $\lambda$-cent-dians and Pareto optimal solutions and describe some peculiarities of them. Besides, in Section \ref{sec:complexity}, we prove that the problems addressed in this paper are NP-hard. Furthermore, in Section \ref{sec:applications} we show that these problems can be applied with minor modifications. Finally, our conclusions are presented in Section \ref{sec:conclusions}.

\section{Problems definition}\label{sec:problem_definition}

\subsection{Setting description}
In this paper, we address a network design problem using an underlying undirected graph to be $\mathcal{N} = (N,E)$, with associated costs for each node $i\in N$ and each link $e\in E$, denoted by $b_i \ge 0$ and $c_e \ge 0$, respectively. For every link $e=\{i,j\}\in E$, we define two arcs: $a=(i,j)$ and $\hat{a} =(j,i)$. We denote the set of resulting arcs as $A$. The length of each arc $a \in A$ is denoted by $d_a\geq 0$. We employ the notation $i\in e$ if node $i$ is a terminal node of $e$. Note that length can be substituted for time to traverse, generalized cost, or any other parameter assigned to each arc. We denote the set of edges incident to node $i$ by $\delta(i) \subseteq E$. Analogously, $\delta^+(i),\delta^-(i)\subseteq A$ denote the sets of arcs going out and in of node $i$. Let $d_{\mathcal{N}}(i,j)$ be the distance given by the shortest path between $i$ and $j$.

We assume that the mobility patterns are known and represented by a set $W\subset N\times N$ of origin/destination pairs (called O/D pairs), and a matrix collecting the expected demand between each O/D pair in a given period of time. Note that couples with equal origin and destination are not included in $W$. Specifically, for any given pair \( w = (w^s,w^t)\in W \) where \( w^s \neq w^t \), the demand traveling from the origin node \( w^s \in N \) to the destination node \( w^t\in N \) is known and denoted by \( g^w > 0 \). The aggregated demand is represented by \( G=\sum\limits_{w\in W}g^w \).

We also consider a private utility, \( u^w > 0 \), modeling that there already exists a (unique) mode, referred to as the private mode, to meet the demand. This mode competes with the potential network to be built on an all-or-nothing basis. In other words, an O/D pair \( (w^s,w^t) \) will utilize the new network only if it offers a path between \( w^s \) and \( w^t \) whose length or utility is equal to or shorter than the associated private utility \( u^w \). If these conditions are met, we say that the O/D pair is covered or served by the constructed network. Additionally, for each \( w\in W \), the subgraph \( \mathcal{N}^{w}=(N^w, E^w) \) comprises all nodes and edges that belong to a path in \( \mathcal{N} \) with a length less than or equal to \( u^w \). The corresponding set of arcs is represented by \( A^w \).

We are interested in subgraphs, denoted as $\mathcal{S}=(N_{\mathcal{S}},E_{\mathcal{S}})$, of $\mathcal{N}$ that can be constructed respecting a budget constraint. We represent this budget as a fraction $\alpha$ of the total cost of building the potential graph $\mathcal{N}$, noted by $C_{total}$. Therefore, a subgraph $\mathcal{S}$ is feasible if the condition
\begin{equation}
	\sum_{i\in N_{\mathcal S} } b_i + \sum_{e \in E_{\mathcal S}} c_e \le \alpha \left( \sum_{i\in N} b_i + \sum_{e \in E} c_e \right) = \alpha\, C_{total} \label{eq:subgraph_feasibility}
\end{equation}

\noindent is met. For a fixed value of $\alpha \in [0,1]$, we denote by $\mathbfcal{N}^\alpha$  the set of all subgraphs of $\mathcal N$ satisfying \eqref{eq:subgraph_feasibility}. 

For a given subgraph $\mathcal{S}$ and an O/D pair $w\in W$, the term $d_{\mathcal{S}}(w)$ represents the length of the shortest path from $w^s$ to $w^t$ within the subgraph $\mathcal{S}$. If it is not possible to connect a pair $w\in W$ within $\mathcal{S}$, we assume that $d_{\mathcal{S}}(w)=+\infty$. Taking this into account and given that each pair has an associated utility, each demand will travel from its origin to the destination on a path of length $\ell_{\mathcal{S}}(w)=\min\{d_{\mathcal{S}}(w),u^w\}$.

\subsection{Solution concepts}

We now introduce extensions of two solution concepts derived from location science, which are central to this work: the {\it median} and the {\it center}. For a given $\alpha\in (0,1)$, the (weighted) median is defined as a subgraph $\mathcal{S}_m$ in $\mathbfcal{N}^\alpha$ that minimizes the objective function
\begin{equation}
	F_m(\mathcal S) = \frac{1}{G} \sum_{w \in W} g^w\ell_{\mathcal{S}}(w). \label{median_obj}
\end{equation}
Given two subnetworks $\mathcal{S}_1,\mathcal{S}_2\subseteq\mathcal{N}$, it is said that $\mathcal{S}_1$ is more efficient that $\mathcal{S}_2$ iff $F_m(\mathcal{S}_1)<F_m(\mathcal{S}_2)$

A subgraph $\mathcal{S}_c$ is called a center if it is a minimizer of the following objective function
\begin{equation}
	F_c(S) = \max_{w \in W} \ell_{\mathcal{S}}(w). \label{center_obj}
\end{equation}

Related to the notion of center, it can be considered the \textit{weighted-center} solution. The \textit{weighted-center} solution is defined as the subgraph $\mathcal{S}_c^G$ in $\mathcal{N}^{\alpha}$ that minimizes the objective function

\begin{equation*}
	F_c^G(\mathcal{S}) = \frac{\max\limits_{w\in W}\left\{ g^w \ell_{\mathcal{S}}(w)\right\}}{g^{w_0}},\quad w_0=\argmax_{w\in W}\left\{ g^w\ell_{\mathcal{S}}(w)\right\}.
\end{equation*}

\noindent Given that the center minimizes the maximum travel time, it could lead to \textit{inefficient} solutions. This inefficiency is produced by not considering feasible subgraphs in which travel time decreases for some users maintaining the center objective function value. We depict this situation in Example 1.

\tikzstyle{vertex}=[circle,fill=white, draw=black,minimum size=20pt,font=\footnotesize,inner sep=0pt]
\tikzstyle{vertex2}=[circle,minimum size=15pt,font=\scriptsize,inner sep=0pt]
\tikzstyle{selected vertex} = [vertex, fill=red!24]
\tikzstyle{edge} = [draw,thick,-]
\tikzstyle{edge2} = [draw,dashed,-]
\tikzstyle{weight} = [font=\scriptsize,inner sep=1pt]
\tikzstyle{selected edge} = [draw,line width=5pt,-,red!50]
\tikzstyle{ignored edge} = [draw,line width=5pt,-,black!20]

\begin{example}
Consider the following network with a budget $\alpha\,C_{total} = 90$. Figure \ref{fig:example1} represents the underlying network where the couple labeling the edges represents building cost and length, respectively. Table \ref{table:example1} shows O/D pairs, private utility and demand of this example.
\begin{table}[ht] % 
		%\centering
		\begin{minipage}{0.4\textwidth}
		\begin{tabular}{cccc}
			\hline
			Origin & Destination & $u^w$ & $g^w$ \\
			\hline
			1 & 6 & 92 & 200 \\
			2 & 5 & 92 & 50 \\
			4 & 1 & 92 & 50 \\
			\hline
		\end{tabular}\caption{Data in Example 1. We consider $\alpha\,C_{total} = 90$.}
		\label{table:example1}
	\end{minipage}
	\hspace*{0.5cm}
		\begin{minipage}{0.5\textwidth}
			\begin{tikzpicture}[scale=1.2, auto,swap]
				\foreach \pos/\name in {{(1.7,2.5)/1}, {(3,4)/2}, {(3,1)/3}, {(5,4)/4}, {(5,1)/5}, {(6.3,2.5)/6}}
				\node[vertex] (\name) at \pos {$\name$};
				\node[vertex2] at (1.2,2.5) {$5$};
				\node[vertex2] at (3,0.5) {$10$};
				\node[vertex2] at (3,4.5) {$10$};
				\node[vertex2] at (5,0.5) {$5$};
				\node[vertex2] at (5,4.5) {$5$};
				\node[vertex2] at (6.8,2.5) {$10$};
				\foreach \source/\dest/\weight in {2/1/{(20,30)}, 1/3/{(10,20)}, 3/2/{(10,10)}, 4/2/{(20,30)}, 3/4/{(40,60)}, 3/5/{(20,10)}, 5/4/{(10,30)}, 6/4/{(20,30)}, 5/6/{(30,70)}}
				\path[edge] (\source) -- node[weight] {$\weight$} (\dest);
			\end{tikzpicture}
			\captionsetup{hypcap=false}
			
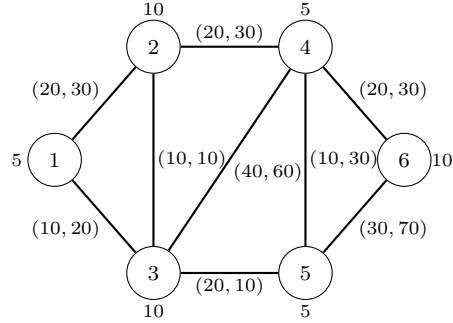
\captionof{figure}{Network in Example 1.}
			\label{fig:example1}
		\end{minipage}
	\end{table}

We observe that, with the given bound on the total cost, it is impossible to cover the three O/D pairs simultaneously. Hence, the objective value for the center is $F_c(\mathcal{S}_0)=92$. Note that the empty subgraph, $\mathcal{S}_0=\emptyset$, has a center value $F_c(\mathcal{S}_0) = 92$, and it is thus an optimal solution for the center problem. In other words, not constructing anything can be optimal for the center problem. Furthermore, this solution also yields a median value  $F_m(\mathcal{S}_0) = 92$. Now let us consider the following two subgraphs:

\begin{itemize}
	\item $\mathcal{S}_1$ composed by node set $N_1=\{1,2,4,6\}$ and edge set $E_1=\{(1,2),(2,4),(4,6)\}$ such that O/D pairs $(1,6)$ and $(4,1)$ are served; and 
	\item $\mathcal{S}_2$ with $N_2=\{1,2,4,5\}$ and $E_2=\{(1,2),(2,4),(4,5)\}$ such that O/D pairs $(2,5)$ and $(4,1)$ are served.
\end{itemize}
The solutions above have an objective function value for the center $F_c(\mathcal{S}_1) = F_c(\mathcal{S}_2) =92$, but the median objective changes. Indeed, these values are $F_m(\mathcal{S}_1)\approxeq 85.33$ and $F_m(\mathcal{S}_2)\approxeq 81.33$ respectively. 
\end{example}

\begin{example}
Using the aforementioned notation, we examine a network as outlined in Table \ref{table:example2} and illustrated in Figure \ref{fig:example2}. 
\begin{table}[ht]
	\begin{minipage}{0.4\textwidth}
		%\centering
		\begin{tabular}{cccc}
			\hline
			Origin & Destination & $u^w$ & $g^w$ \\
			\hline
			1 & 6 & 92 & 5 \\
			2 & 3 & 40 & 65 \\
			4 & 1 & 50 & 50 \\
			\hline
		\end{tabular}
		\caption{Data in Example 2. We consider $\alpha\,C_{total} = 90$.}
		\label{table:example2}
	\end{minipage}
	\hspace*{0.5cm}
	\begin{minipage}{0.5\textwidth}
		\begin{tikzpicture}[scale=1.2, auto,swap]
			\foreach \pos/\name in {{(1.7,2.5)/1}, {(3,4)/2}, {(3,1)/3}, {(5,4)/4}, {(5,1)/5}, {(6.3,2.5)/6}}
			\node[vertex] (\name) at \pos {$\name$};
			\node[vertex2] at (1.2,2.5) {$5$};
			\node[vertex2] at (3,0.5) {$10$};
			\node[vertex2] at (3,4.5) {$10$};
			\node[vertex2] at (5,0.5) {$5$};
			\node[vertex2] at (5,4.5) {$5$};
			\node[vertex2] at (6.8,2.5) {$10$};
			\foreach \source/\dest/\weight in {2/1/{(20,20)}, 1/3/{(10,30)}, 3/2/{(10,20)}, 4/2/{(20,25)}, 3/4/{(40,40)}, 3/5/{(20,30)}, 5/4/{(10,30)}, 6/4/{(30,70)}, 5/6/{(30,20)}}
			\path[edge] (\source) -- node[weight] {$\weight$} (\dest);
		\end{tikzpicture}
		\captionsetup{hypcap=false}
		\captionof{figure}{Network in Example 2.}
		\label{fig:example2}
	\end{minipage}
\end{table}
In this example, the solution that optimizes the median does not serve the most distant pair or those with low demand. By contrast, when minimizing the center objective, a significantly larger value for the median objective emerges.

The optimal median and center subgraphs are illustrated in Figure \ref{fig:example2_1} and \ref{fig:example2_2}, respectively. 

\begin{minipage}{0.45\textwidth}
	\centering
	\begin{tikzpicture}[scale=0.5,auto,swap]
			\foreach \pos/\name in {{(1.5,2.5)/1}, {(3,4)/2}, {(3,1)/3}, {(5,4)/4}}
			\node[vertex] (\name) at \pos {$\name$};
			\node[vertex2] at (4,5.5) {\normalsize{$\mathcal{S}_m$}};
			\foreach \source/\dest in {2/1, 3/2, 4/2}
			\path[edge] (\source) -- (\dest);
		\end{tikzpicture}
	\captionsetup{hypcap=false}
	\captionof{figure}{Median network in Example 2.}
	\label{fig:example2_1}
\end{minipage}
\begin{minipage}{0.45\textwidth}
	\centering
	\begin{tikzpicture}[scale=0.5, auto,swap]
			\foreach \pos/\name in {{(1.5,2.5)/1}, {(3,1)/3}, {(5,1)/5}, {(6.5,2.5)/6}}
			\node[vertex] (\name) at \pos {$\name$};
			\node[vertex2] at (4,5.5) {\normalsize{$\mathcal{S}_c$}};
			\foreach \source/\dest in { 1/3, 3/5, 5/6}
			\path[edge] (\source) -- (\dest);
		\end{tikzpicture}
	\captionsetup{hypcap=false}
	\captionof{figure}{Center network in Example 2.}
	\label{fig:example2_2}
\end{minipage}
\end{example}

The computed values for the median and center objectives are as follows:
$$F_m(\mathcal{S}_m)=\frac{1}{120}(5\cdot 92 + 65\cdot 20 + 50\cdot 45)= \frac{4010}{120}\approxeq38.19,\quad F_c(\mathcal{S}_m)=92,$$
$$F_m(\mathcal{S}_c)=\frac{1}{120}(5\cdot 80 + 65\cdot 40 + 50\cdot 50)= \frac{5500}{120}\approxeq52.38, \quad F_c(\mathcal{S}_c)=80.$$
\noindent For the respective networks, it is noteworthy that even when considering the weighted-center, the loss in efficiency may still persist in specific scenarios. For instance, in this particular case, the optimal weighted-center remains $\mathcal{S}_c$ with a value of $F_c^G(\mathcal{S}_c) = 40$.

Examples 1 and 2 show the necessity of having a refined conceptual framework to consider solutions that are not dominated and capture the trade-off between both solution concepts. While the median generally prioritizes users located at the network's center, often to the detriment of those in distant regions, the center distinctly favors those in remote areas without necessarily considering the efficiency of the design. Recognizing this need for balanced solutions, researchers in the field of location science have, since the 1970s, explored the {\it $\lambda$-cent-dian} concept (refer to \citet{halpern1976location}). This concept represents a convex combination of both the center and median objectives. In the realm of network design, for a given $\lambda \in[0,1]$, the $\lambda$-cent-dian is a subgraph that aims to minimize the following objective function:
\begin{equation}
	H_\lambda(S) = \lambda F_c(\mathcal S) + (1-\lambda)F_m(\mathcal S) = \lambda \max_{w \in W} \ell_{\mathcal{S}}(w) + (1-\lambda) \frac{1}{G} \sum_{w \in W} g^w\ell_{\mathcal{S}}(w), \label{eq:cent-dian}
\end{equation}

As it is mentioned in \cite{ogryczak1997centdians}, the problem of finding the $\lambda$-cent-dian can be seen as the weighted version of finding the solution to the bi-criteria problem of minimizing objectives \eqref{median_obj} and \eqref{center_obj}. 

\cite{hansen1991median} introduced the concept of the {\it generalized-center} for facility location. This concept was formulated to reduce discrepancies in accessibility among users as much as possible. In this work, the {\it generalized-center} corresponds to a subgraph, $\mathcal S_{gc}$, which minimizes the disparity between the center and median objectives, denoted as $F_{gc}(\mathcal{S})$. When one considers the difference between the function $F_m(\mathcal{S})$ and the function $F_c^G(\mathcal{S})$, instead of $F_c(\mathcal{S})$, the resultant solution network is named the \textit{weighted-generalized-center}. The objective function for this is denoted by $F_{gc}^G(\mathcal{S})$.

As we have observed in Example 1, one generalized-center is the empty subgraph, which is an unreasonable solution from the point of view of the median value. Example 3 exemplifies a situation where the generalized-center worsens both the center and the median values. To avoid such solution networks, the optimal solution network is restricted to the set of solution networks so-called as \textit{Pareto-optimal concerning the distances (of the shortest paths)}. A subgraph $\mathcal S \in \mathbfcal{N}^\alpha$ is {Pareto-optimal with respect to the distances (travel times)} if there does not exist a subgraph $\mathcal S' \in \mathbfcal{N}^\alpha$ such that $\ell_{\mathcal{S}'}(w) \le \ell_{\mathcal{S}}(w),$
for all $w\in W$ where at least one of these inequalities is strictly satisfied. We denote the set of subgraphs satisfying this definition of Pareto-optimality as $\mathbfcal{PO}^\alpha$. Thus, we redefine the generalized-center as the optimal solution to the problem
\begin{equation}\label{eq:GC}
	\min\{F_c(\mathcal{S})-F_m(\mathcal{S}): \mathcal{S}\in \mathbfcal{PO}^{\alpha}\}.
\end{equation}
In the same way, the \textit{weighted-generalized-center} is defined as the optimal solution to the problem
\begin{equation}\label{eq:GC_weighted}
	\min\{|F^G_c(\mathcal{S})-F_m(\mathcal{S})|: \mathcal{S}\in \mathbfcal{PO}^{\alpha}\}.
\end{equation}
The following examples depict the discussion above.

\begin{example}
%\paragraph{Example 3} 
Let us consider the following example described in Table \ref{table:example3} and Figure \ref{fig:example3}.

The generalized-center $\mathcal{S}_{gc}$ is:

\vspace{0.3cm}
\begin{minipage}{0.25\textwidth}
	\begin{tikzpicture}[scale=0.45, auto,swap]
		\foreach \pos/\name in {{(3,4)/2}, {(3,1)/3}, {(5,4)/4}, {(5,1)/5}, {(6.5,2.5)/6}}
		\node[vertex] (\name) at \pos {$\name$};
		\foreach \source/\dest in {2/3, 3/5, 5/4, 4/6}
		\path[edge] (\source) -- (\dest);
	\end{tikzpicture}
\end{minipage}
\begin{minipage}{0.65\textwidth}
	$F_c(\mathcal{S}_{gc})-F_m(\mathcal{S}_{gc}) =92- \frac{1}{105}(50\cdot 92+5\cdot 70+50\cdot 92) \approxeq 92-90.95$.
\end{minipage}
\vspace{0.3cm}

\noindent Nevertheless, there exists a network, $\mathcal{S}_1$, more efficient in terms of the distances of the shortest paths than $\mathcal{S}_{gc}$:

\begin{minipage}{0.25\textwidth}
	\begin{tikzpicture}[scale=0.45, auto,swap]
		\foreach \pos/\name in {{(1.5,2.5)/1}, {(3,4)/2}, {(5,4)/4}, {(6.5,2.5)/6}}
		\node[vertex] (\name) at \pos {$\name$};
		\foreach \source/\dest/\weight in {2/1, 4/2, 6/4}
		\path[edge] (\source) -- (\dest);
	\end{tikzpicture}
\end{minipage}
\begin{minipage}{0.65\textwidth}
	$F_c(\mathcal{S}_1)-F_m(\mathcal{S}_1) = 40 - \frac{1}{105}(50\cdot 30+ 5\cdot 20+50\cdot 40) \approxeq 40-34.28$
\end{minipage}
\vspace{0.5cm}

In fact, $\mathcal{S}_1$ is simultaneously the center and the median. Even if we minimize $F_{gc}^G(\,\cdot\,)$, instead of $F_{gc}(\,\cdot\,)$, the optimal solution does not change, which is $\mathcal{S}_{gc}$. Network $\mathcal{S}_1$ is Pareto-optimal with respect to the distances, but $\mathcal{S}_{gc}$ is not. Even more, each of the O/D pairs has its shortest path shorter in $\mathcal{S}_1$ than in $\mathcal{S}_{gc}$. Locating $\mathcal{S}_{gc}$ can worsen the center and median values. This solution does not capture the trade-off between the center and the median values.
\end{example}

\begin{table}[ht]
%\centering
\begin{minipage}{0.4\textwidth}
		\begin{tabular}{cccc}
			\hline
			Origin & Destination & $u^w$ & $g^w$ \\
			\hline
			1 & 2 & 92 & 50 \\
			2 & 6 & 100 & 5 \\
			4 & 1 & 92 & 50 \\
			\hline
		\end{tabular}
		\caption{Data in Example 3. We consider $\alpha\,C_{total} = 90$.}
		\label{table:example3}
\end{minipage}
\hspace*{0.5cm}
\begin{minipage}{0.5\textwidth}
	\begin{tikzpicture}[scale=1.2, auto,swap]
		\foreach \pos/\name in {{(1.7,2.5)/1}, {(3,4)/2}, {(3,1)/3}, {(5,4)/4}, {(5,1)/5}, {(6.3,2.5)/6}}
		\node[vertex] (\name) at \pos {$\name$};
		\node[vertex2] at (1.2,2.5) {$5$};
		\node[vertex2] at (3,0.5) {$10$};
		\node[vertex2] at (3,4.5) {$10$};
		\node[vertex2] at (5,0.5) {$5$};
		\node[vertex2] at (5,4.5) {$5$};
		\node[vertex2] at (6.8,2.5) {$10$};
		\foreach \source/\dest/\weight in {2/1/{(20,30)}, 1/3/{(10,20)}, 3/2/{(10,20)}, 4/2/{(20,10)}, 3/4/{(70,60)}, 3/5/{(5,10)}, 5/4/{(10,30)}, 6/4/{(20,10)}, 5/6/{(10,30)}}
		\path[edge] (\source) -- node[weight] {$\weight$} (\dest);
	\end{tikzpicture}
	\captionsetup{hypcap=false}
	
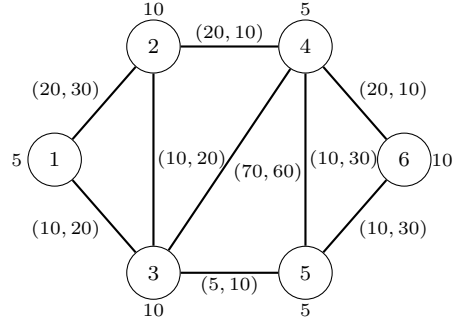
\captionof{figure}{Network in Example 3.}
	\label{fig:example3}
\end{minipage}
\end{table}

Even when the problem of the generalized-center is constrained to the set $\mathbfcal{PO}^{\alpha}$, there is potential for obtaining highly inefficient solution graphs as optimal solutions. This is illustrated in Example $4$. Specifically, when minimizing the difference $F_c(\,\cdot\,)-F_m(\,\cdot\,)$, if multiple solution networks have the same $F_c(\,\cdot\,)$ value, the network with the least favorable $F_m(\,\cdot\,)$ value will be chosen.

\begin{example}
%\paragraph{Example 4} 
We depict in Table \ref{table:example4} and Figure \ref{fig:example4} a situation where the generalized-center in $\mathbfcal{PO}^{\alpha}$ leads to a very inefficient solution.

\begin{table}[ht]
\begin{minipage}{0.4\textwidth}
%\centering
\begin{tabular}{cccc}
			\hline
			Origin & Destination & $u^w$ & $g^w$ \\
			\hline
			1 & 2 & 35 & 50 \\
			2 & 4 & 35 & 30 \\
			3 & 1 & 35 & 30 \\
			4 & 3 & 35 & 20 \\
			\hline
		\end{tabular}
		\caption{Data in Example 4. We consider $\alpha\,C_{total} = 50$.}
		\label{table:example4}
\end{minipage}
\hspace*{0.5cm}
\begin{minipage}{0.43\textwidth}
	\centering
	\begin{tikzpicture}[scale=1, auto,swap]
		\foreach \pos/\name in {{(3,4)/1}, {(3,1)/3}, {(6,4)/2}, {(6,1)/4}}
		\node[vertex] (\name) at \pos {$\name$};
		\node[vertex2] at (3,0.5) {$5$};
		\node[vertex2] at (3,4.5) {$5$};
		\node[vertex2] at (6,0.5) {$5$};
		\node[vertex2] at (6,4.5) {$5$};
		\foreach \source/\dest/\weight in {2/1/{(10,10)}, 1/3/{(10,10)}, 3/4/{(10,10)}, 4/2/{(10,10)} }
		\path[edge] (\source) -- node[weight] {$\weight$} (\dest);
	\end{tikzpicture}
	\captionsetup{hypcap=false}
	\captionof{figure}{Network in Example 4.}
	\label{fig:example4}
\end{minipage}
	\end{table}

The set $\mathbfcal{PO}^{\alpha}$ consists of:
\vspace{0.3cm}

\begin{minipage}{0.20\textwidth}
	\begin{tikzpicture}[scale=0.4, auto,swap]
		\foreach \pos/\name in {{(3,4)/1}, {(3,1)/3}, {(6,4)/2}, {(6,1)/4}}
		\node[vertex] (\name) at \pos {$\name$};
		\node[vertex2] at (1.7,2.5) {\normalsize{$\mathcal{S}_1$}};
		\foreach \source/\dest in {1/2, 1/3, 3/4}
		\path[edge] (\source) -- (\dest);
	\end{tikzpicture}
\end{minipage}
\begin{minipage}{0.70\textwidth}
	\begin{equation*}
	\begin{array}{l}
\displaystyle F_c(\mathcal{S}_1)-F_m(\mathcal{S}_1) =\\ 
\displaystyle =30 - \frac{1}{130}(50\cdot 10 + 30\cdot30 + 30\cdot 10 + 20\cdot 10) =\\ \noalign{\smallskip}
\displaystyle = 30 - \frac{1900}{130} \approxeq 15.38	
	\end{array}
	\end{equation*}
\end{minipage}
\vspace{0.5cm}

\begin{minipage}{0.20\textwidth}
	\begin{tikzpicture}[scale=0.4, auto,swap]
		\foreach \pos/\name in {{(3,4)/1}, {(3,1)/3}, {(6,4)/2}, {(6,1)/4}}
		\node[vertex] (\name) at \pos {$\name$};
		\node[vertex2] at (1.7,2.5) {\normalsize{$\mathcal{S}_2$}};
		\foreach \source/\dest in {1/3, 2/4, 3/4}
		\path[edge] (\source) -- (\dest);
	\end{tikzpicture}
\end{minipage}
\begin{minipage}{0.70\textwidth}
		\begin{equation*}
		\begin{array}{l}
F_c(\mathcal{S}_2)-F_m(\mathcal{S}_2) =\\
\displaystyle = 30 - \frac{1}{130}(50\cdot 30 + 30\cdot10 + 30\cdot 10 + 20\cdot 10) =\\ \noalign{\smallskip}
\displaystyle = 30 - \frac{2300}{130} \approxeq 12.30
	\end{array}
\end{equation*}
\end{minipage}
\vspace{0.5cm}

\begin{minipage}{0.20\textwidth}
	\begin{tikzpicture}[scale=0.4, auto,swap]
		\foreach \pos/\name in {{(3,4)/1}, {(3,1)/3}, {(6,4)/2}, {(6,1)/4}}
		\node[vertex] (\name) at \pos {$\name$};
		\node[vertex2] at (1.7,2.5) {\normalsize{$\mathcal{S}_3$}};
		\foreach \source/\dest in {1/2, 2/4, 3/4}
		\path[edge] (\source) -- (\dest);
	\end{tikzpicture}
\end{minipage}
\begin{minipage}{0.70\textwidth}
\begin{equation*}
\begin{array}{l}
	F_c(\mathcal{S}_3)-F_m(\mathcal{S}_3) =\\
\displaystyle = 30 - \frac{1}{130}(50\cdot 10 + 30\cdot10 + 30\cdot 30 + 20\cdot 10) = \\ \noalign{\smallskip}
\displaystyle = 30 - \frac{1900}{130} \approxeq 15.38
\end{array}
\end{equation*}
\end{minipage}
\vspace{0.5cm}

\begin{minipage}{0.20\textwidth}
	\begin{tikzpicture}[scale=0.4, auto,swap]
		\foreach \pos/\name in {{(3,4)/1}, {(3,1)/3}, {(6,4)/2}, {(6,1)/4}}
		\node[vertex] (\name) at \pos {$\name$};
		\node[vertex2] at (1.7,2.5) {\normalsize{$\mathcal{S}_4$}};
		\foreach \source/\dest in {1/2, 1/3, 2/4}
		\path[edge] (\source) -- (\dest);
	\end{tikzpicture}
\end{minipage}
\begin{minipage}{0.70\textwidth}
	\begin{equation*}
	\begin{array}{l}
F_c(\mathcal{S}_4)-F_m(\mathcal{S}_4) =\\
\displaystyle = 30 - \frac{1}{130}(50\cdot 10 + 30\cdot10 + 30\cdot 10 + 20\cdot 30) =\\ \noalign{\smallskip}
\displaystyle = 30 - \frac{1430}{130} \approxeq 19
	\end{array}
	\end{equation*}
\end{minipage}
\vspace{0.5cm}

The four solution networks belong to $\mathbfcal{PO}^{\alpha}$, and each one is a center. While $\mathcal{S}_2$ is the generalized-center, $\mathcal{S}_4$ is the most efficient network. One might assume that the issues highlighted with the median and center values are resolved when considering the weighted function. However, this is not the case. It is easy to check that the generalized-weighted-center remains $\mathcal{S}_2$.
\end{example}

The last example shows the necessity to revisit the notion of Pareto-optimality. We perform this by extending the notion of Pareto-optimality to the bi-criteria setting: a subgraph $\mathcal{S}$ is Pareto-optimal if there no exists another subgraph $\mathcal{S'}$ in $\mathbfcal{N}^\alpha$ for which 
\begin{equation}
	F_m(S') \le F_m(S) \quad \mbox{ and } \quad F_c(S') \le F_c(S),
\end{equation}
\noindent with one of the inequalities being strict. We denote the set of subgraphs satisfying this property $\mathbfcal{PO}^\alpha_2$. Hence, we define the \textit{restricted-generalized-center} as an optimal solution to the problem 
\begin{equation}
	\min\{F_c(\mathcal{S})-F_m(\mathcal{S}): \mathcal{S}\in \mathbfcal{PO}^{\alpha}_2\}. \label{eq:restricted_general_center}
\end{equation}

\begin{remark}\label{remark:generalized_center}
	In Example 4, $\mathbfcal{PO}^{\alpha}_2$ is composed only of $\mathcal{S}_4$, then it is the unique minimizer of \eqref{eq:restricted_general_center}. Furthermore, it is easy to check that always $\mathbfcal{PO}^{\alpha}_2\subseteq \mathbfcal{PO}^{\alpha}$.
\end{remark}

Analogously to the restricted-generalized-center, we introduce the \textit{$\lambda$-restricted-cent-dian}. It is defined as the optimal solution to the problem:
\begin{equation}
	\min\{H_{\lambda}(\mathcal{S}): \mathcal{S}\in \mathbfcal{PO}^{\alpha}_2\}.
\end{equation}

Then, by construction, it is easy to check that for $\lambda\in(0,1)$ the $\lambda$-restricted-cent-dian is simply a $\lambda$-cent-dian. Furthermore, the converse is also true.

\begin{proposition}\label{remark:centdian_pO2} For $\lambda\in(0,1)$, the corresponding $\lambda$-cent-dian always belongs to the set $\mathbfcal{PO}^{\alpha}_2$. 
\end{proposition}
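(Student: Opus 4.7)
The plan is to proceed by contradiction using the strict positivity of both $\lambda$ and $1-\lambda$. Suppose $\mathcal{S}^{*}\in\mathbfcal{N}^{\alpha}$ is a $\lambda$-cent-dian, i.e.\ a minimizer of $H_{\lambda}$ over $\mathbfcal{N}^{\alpha}$, and assume for contradiction that $\mathcal{S}^{*}\notin\mathbfcal{PO}^{\alpha}_{2}$. By the definition of bi-criteria Pareto-optimality given just before the statement, this yields a subgraph $\mathcal{S}'\in\mathbfcal{N}^{\alpha}$ satisfying $F_{m}(\mathcal{S}')\le F_{m}(\mathcal{S}^{*})$ and $F_{c}(\mathcal{S}')\le F_{c}(\mathcal{S}^{*})$, with at least one of the two inequalities being strict.

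Next I would simply substitute into the definition \eqref{eq:cent-dian} of $H_\lambda$ and compute
\begin{equation*}
H_{\lambda}(\mathcal{S}')-H_{\lambda}(\mathcal{S}^{*})
= \lambda\bigl(F_{c}(\mathcal{S}')-F_{c}(\mathcal{S}^{*})\bigr) + (1-\lambda)\bigl(F_{m}(\mathcal{S}')-F_{m}(\mathcal{S}^{*})\bigr).
\end{equation*}
Since $\lambda\in(0,1)$, both coefficients $\lambda$ and $1-\lambda$ are strictly positive. Each of the two parenthesized differences is non-positive by the assumed domination, and at least one of them is strictly negative. Therefore the whole expression is strictly negative, giving $H_{\lambda}(\mathcal{S}')<H_{\lambda}(\mathcal{S}^{*})$, which contradicts the optimality of $\mathcal{S}^{*}$ for $H_{\lambda}$ over $\mathbfcal{N}^{\alpha}$. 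Hence $\mathcal{S}^{*}\in\mathbfcal{PO}^{\alpha}_{2}$, as required.

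There is no real obstacle here: the argument is the standard scalarization-to-Pareto implication, and the only delicate point is making sure that \emph{both} scalarization weights are strictly positive, which is exactly guaranteed by the hypothesis $\lambda\in(0,1)$. It is worth emphasizing in the write-up that the statement would fail at the endpoints $\lambda=0$ or $\lambda=1$: in those cases only one of the two criteria is active in $H_\lambda$, and ties in the active criterion may be broken in favor of a subgraph that is dominated in the inactive one, as Example 1 (for $\lambda=1$, where $\mathcal{S}_0=\emptyset$ is a center but is dominated by $\mathcal{S}_2$) illustrates. This also explains why Proposition~\ref{remark:centdian_pO2} gives exactly the converse needed to identify the $\lambda$-cent-dian with the $\lambda$-restricted-cent-dian for interior $\lambda$, as claimed immediately before the statement.
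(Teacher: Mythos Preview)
Your proof is correct and follows essentially the same route as the paper: a contradiction argument using that a dominating $\mathcal{S}'$ yields $H_{\lambda}(\mathcal{S}')<H_{\lambda}(\mathcal{S}^{*})$ because both weights $\lambda$ and $1-\lambda$ are strictly positive. Your additional remark on the failure at the endpoints $\lambda\in\{0,1\}$ is a nice complement but not part of the paper's proof.
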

\begin{proof}
This can be easily proved by \textit{Reductio ad absurdum}. Let's assume that $\mathcal{S}$ is the $\lambda$-cent-dian but $\mathcal{S}\notin \mathcal{PO}_2^{\alpha}$. Then, since $\mathcal{S}\notin \mathcal{PO}_2^{\alpha}$ then there exists $\mathcal{S}'$ such that $F_m(\mathcal{S}')\leq F_m(\mathcal{S})$ and $F_c(\mathcal{S}')\leq F_c(\mathcal{S})$, with one of the inequalities being strict. Hence $\lambda F_m(\mathcal{S}')+(1-\lambda) F_c(\mathcal{S}')< F_m(\mathcal{S})+(1-\lambda) F_c(\mathcal{S})$. That is, $\mathcal{S}$ is not a $\lambda$-cent-dian, with is a contradiction. Thus, in this case, the $\lambda$-cent-dian is always a $\lambda$-restricted-cent-dian. 
\end{proof}

The statement of Proposition \ref{remark:centdian_pO2} is not necessarily true for $\lambda\geq1$, specifically for the limiting case $\lambda\rightarrow\infty$. As shown in Example 4, the generalized-center does not necessarily belong to the set $\mathbfcal{PO}^{\alpha}_2$. Besides, we will show that for all $\lambda\geq 1$ the corresponding $\lambda$-restricted-cent-dian is always a center. Note that, given a bound $\alpha$, the center of a general network $\mathcal{N}$ may be non-unique and, in such cases, not all centers belong to $\mathbfcal{PO}^{\alpha}_2$. This issue is shown in Example $4$. We know that $\mathbfcal{PO}^{\alpha}=\{\mathcal{S}_1,\mathcal{S}_2,\mathcal{S}_3,\mathcal{S}_4\}$, but $\mathcal{S}_1,\mathcal{S}_2,\mathcal{S}_3\notin\mathbfcal{PO}^{\alpha}_2$. To belong to $\mathbfcal{PO}^{\alpha}_2$, the center must be unique or it must be the center with the best value of $F_m(\,\cdot\,)$. Thus, the center belonging to $\mathbfcal{PO}^{\alpha}_2$ is an optimal solution of the following lexicographic problem
\begin{equation}\label{eq:lexmin}
	\mbox{lex}\min\{[F_c(\mathcal{S}),F_m(\mathcal{S})]:\,\,\mathcal{S}\in\mathbfcal{N}^{\alpha}\}.
\end{equation}

The lexicographic minimization in \eqref{eq:lexmin} entails that we first minimize $F_c(\,\cdot\,)$ on the set $\mathbfcal{N}^{\alpha}$. Subsequently, we minimize $F_m(\,\cdot\,)$ on the optimal subnetworks set determined by $F_c(\,\cdot\,)$. This second minimization step is only necessary when the optimal solution of $F_c(\,\cdot\,)$ is not unique. We refer to the optimal solution of \eqref{eq:lexmin} as a \textit{lexicographic cent-dian}. It is important to highlight that the lexicographic cent-dian is also a center, and in the case where the center is unique, this center is also the lexicographic cent-dian.

\begin{proposition}\label{prop:restricted_lexmin}
	Consider $\lambda \ge 1.$ The restricted-generalized-center and the $\lambda$-restricted-cent-dian are lexicographic cent-dian in $\mathcal{N}^{\alpha}$.
	Conversely, any lexicographic cent-dian is a restricted generalized center and a $\lambda$-restricted-cent-dian.
\end{proposition}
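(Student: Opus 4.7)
The plan is to exploit the structural property of $\mathbfcal{PO}^{\alpha}_2$: two distinct Pareto-optimal subnetworks cannot be comparable in the product order on $(F_c,F_m)$, so whenever $\mathcal{S},\mathcal{S}'\in\mathbfcal{PO}^{\alpha}_2$ satisfy $F_c(\mathcal{S})<F_c(\mathcal{S}')$ we must have $F_m(\mathcal{S})>F_m(\mathcal{S}')$, and equal $F_c$-values on $\mathbfcal{PO}^{\alpha}_2$ force equal $F_m$-values. I will also use the standard observation that, since $\mathbfcal{N}^{\alpha}$ is finite, every feasible subnetwork is weakly dominated by some element of $\mathbfcal{PO}^{\alpha}_2$, hence $\min_{\mathcal{S}\in\mathbfcal{N}^{\alpha}}F_c(\mathcal{S})=\min_{\mathcal{S}\in\mathbfcal{PO}^{\alpha}_2}F_c(\mathcal{S})$. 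Combining these two facts identifies the lexicographic cent-dians exactly with the elements of $\mathbfcal{PO}^{\alpha}_2$ attaining the minimum $F_c$-value on $\mathbfcal{PO}^{\alpha}_2$.

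For the forward direction I fix $\lambda\ge 1$ and rewrite $H_\lambda(\mathcal{S})=\lambda F_c(\mathcal{S})-(\lambda-1)F_m(\mathcal{S})$. Taking any $\mathcal{S},\mathcal{S}'\in\mathbfcal{PO}^{\alpha}_2$ with $F_c(\mathcal{S})<F_c(\mathcal{S}')$, and hence $F_m(\mathcal{S})>F_m(\mathcal{S}')$, both $\lambda F_c(\mathcal{S})<\lambda F_c(\mathcal{S}')$ and $-(\lambda-1)F_m(\mathcal{S})\le -(\lambda-1)F_m(\mathcal{S}')$ hold (with equality only when $\lambda=1$), giving $H_\lambda(\mathcal{S})<H_\lambda(\mathcal{S}')$. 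The identical argument applied to $F_c-F_m$ in place of $H_\lambda$ shows that this difference is also strictly decreasing in $F_c$ along $\mathbfcal{PO}^{\alpha}_2$. Hence any $\lambda$-restricted-cent-dian or restricted-generalized-center must attain the minimum $F_c$-value on $\mathbfcal{PO}^{\alpha}_2$, and by the preliminary observation is a lexicographic cent-dian.

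For the converse, a lexicographic cent-dian $\mathcal{S}^{*}$ belongs to $\mathbfcal{PO}^{\alpha}_2$ because any strict dominator would contradict either the primary minimization of $F_c$ on $\mathbfcal{N}^{\alpha}$ or the secondary minimization of $F_m$ among centers. Since $\mathcal{S}^{*}$ attains the minimum $F_c$-value on $\mathbfcal{PO}^{\alpha}_2$, the same monotone-trade-off comparison as above shows that $\mathcal{S}^{*}$ minimizes both $H_\lambda$ (for every $\lambda\ge 1$) and $F_c-F_m$ over $\mathbfcal{PO}^{\alpha}_2$, making it both a $\lambda$-restricted-cent-dian and a restricted-generalized-center. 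The main subtlety throughout is the boundary case $\lambda=1$: there $H_1=F_c$ and the strict inequality for $H_1$ reduces to a strict inequality in $F_c$ alone, so the tie-breaking on $F_m$ must come from the Pareto condition rather than from the minimization of $H_\lambda$ itself, which is precisely why the identification is with a lexicographic cent-dian rather than with an arbitrary center.
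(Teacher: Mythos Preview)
Your proof is correct and follows essentially the same route as the paper: both use the Pareto trade-off on $\mathbfcal{PO}^{\alpha}_2$ (namely, $F_c(\mathcal{S})<F_c(\mathcal{S}')$ forces $F_m(\mathcal{S})>F_m(\mathcal{S}')$) to show that for $\lambda\ge 1$ both $H_\lambda$ and $F_c-F_m$ are strictly monotone in $F_c$ along the Pareto front, so their minimizers over $\mathbfcal{PO}^{\alpha}_2$ are exactly the elements with minimal $F_c$, i.e., the lexicographic cent-dians; the paper writes $H_\lambda=F_c+(\lambda-1)(F_c-F_m)$ whereas you write $H_\lambda=\lambda F_c-(\lambda-1)F_m$, but this is an inessential algebraic variation. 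One slip worth correcting: you state that $F_c-F_m$ is ``strictly decreasing in $F_c$'' along $\mathbfcal{PO}^{\alpha}_2$, but the inequality you derived (and the conclusion you draw from it) shows it is strictly \emph{increasing}.
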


\begin{proof}
	Let $\mathcal{S}$ be the lexicographic cent-dian. That is, $F_c(\mathcal{S})\leq F_c(\mathcal{S}')$ for any $\mathcal{S}'\in\mathbfcal{N}^{\alpha}$, or, if $\exists\,\mathcal{S}'\in\mathbfcal{N}^{\alpha}$ such that $F_c(\mathcal{S})=F_c(\mathcal{S}')$, then $F_m(\mathcal{S})\leq F_m(\mathcal{S}')$.
	Firstly, observe that $\mathcal{S}\in \mathbfcal{PO}_2^{\alpha}$. To continue, we consider $\mathcal{S}'\in \mathbfcal{N}^{\alpha}$ a solution network of $\mathbfcal{PO}_2^{\alpha}$. If $\mathcal{S}'$ is not a lexicographic cent-dian, then $F_c(\mathcal{S})< F_c(\mathcal{S}')$ or, if $F_c(\mathcal{S})= F_c(\mathcal{S}')$ then $F_m(\mathcal{S})< F_m(\mathcal{S}')$. This second situation is not possible since $\mathcal{S}'\in\mathbfcal{PO}_2^{\alpha}$. Hence, being in $\mathbfcal{PO}_2^{\alpha}$, $\mathcal{S}'$ has to satisfy inequalities 
	$$F_c(\mathcal{S})< F_c(\mathcal{S}') \quad \text{ and } \quad F_m(\mathcal{S})> F_m(\mathcal{S}').$$
	Thus, $F_c(\mathcal{S}')-F_m(\mathcal{S}')>F_c(\mathcal{S})-F_m(\mathcal{S})$, which means that the restricted-generalized-center is a lexicographic cent-dian. Aditionally, for $\lambda\geq 1$, this property holds:
	$$H_{\lambda}(\mathcal{S}')=F_c(\mathcal{S}')+(\lambda-1)(F_c(\mathcal{S}')-F_m(\mathcal{S}'))>F_c(\mathcal{S})+(\lambda-1)(F_c(\mathcal{S})-F_m(\mathcal{S}))=H_{\lambda}(\mathcal{S}),$$
	which proves that the corresponding $\lambda$-restricted-cent-dian is a lexicographic cent-dian. 
	
	To finish the proof, since all lexicographic cent-dians have the same center value and median value, they are all restricted-generalized-centers and $\lambda$-restricted-cent-dians with $\lambda>1$.
\end{proof}

Proposition \ref{prop:restricted_lexmin} provides us with a very simple characteristic of the $\lambda$-restricted-cent-dian for $\lambda\geq1$. From this, we conclude that the $\lambda$-restricted-cent-dians for $\lambda\geq1$ are simply the centers with the best median value. On the other hand, it means that the solution concept of the restricted-generalized-center does not provide us with any compromise between the median and center values.

\section{On the relationship of \texorpdfstring{$\lambda$}{}-cent-dians and Pareto optimal solutions}\label{sec:compromise_cent_dian}

Pareto optimal solutions, denoted as $\mathbfcal{PO}^{\alpha}_2$, offer a set of sub-networks that efficiently strike a balance between spatial efficiency and equity. This is achieved by compromising between the median and center objectives. On the other hand, the concept of $\lambda$-cent-dian seeks to find this balance through a linear combination of both objectives. However, the sub-networks produced by the $\lambda$-cent-dian do not encompass all the sub-networks in $\mathbfcal{PO}^{\alpha}_2$.

Indeed, Example 5 illustrates the fact that networks containing cycles might have sub-networks that belong to $\mathbfcal{PO}^{\alpha}_2$ but differ from $\mathcal{S}_c$, $\mathcal{S}_m$ and any $\lambda$-cent-dian for $\lambda \in [0,1]$.

\begin{example}
Consider the potential network depicted in Table \ref{table:example5} and Figure \ref{fig:example5}, with the fixed bound $\alpha\,C_{total} = 70$.
\begin{table}[ht]
%\centering
\begin{minipage}{0.4\textwidth}
\begin{tabular}{cccc}
\hline
Origin & Destination & $u^w$ & $g^w$ \\
\hline
1 & 2 & 70 & 90 \\
3 & 4 & 55 & 20 \\
5 & 6 & 92 & 5 \\
\hline
\end{tabular}\caption{Data in Example 5. We consider $\alpha\,C_{total} = 70$.}
\label{table:example5}
\end{minipage}
\hspace*{0.5cm}
\begin{minipage}{0.5\textwidth}
	\begin{tikzpicture}[scale=1.2, auto,swap]
		\foreach \pos/\name in {{(1.7,2.5)/1}, {(3,4)/2}, {(3,1)/3}, {(5,4)/4}, {(5,1)/5}, {(6.3,2.5)/6}}
		\node[vertex] (\name) at \pos {$\name$};
		\node[vertex2] at (1.2,2.5) {$5$};
		\node[vertex2] at (3,0.5) {$10$};
		\node[vertex2] at (3,4.5) {$10$};
		\node[vertex2] at (5,0.5) {$5$};
		\node[vertex2] at (5,4.5) {$5$};
		\node[vertex2] at (6.8,2.5) {$10$};
		\foreach \source/\dest/\weight in {2/1/{(55,10)}, 1/3/{(5,20)}, 3/2/{(5,40)}, 4/2/{(50,10)}, 3/4/{(5,35)}, 3/5/{(50,10)}, 5/4/{(5,40)}, 6/4/{(5,40)}, 5/6/{(55,20)}}
		\path[edge] (\source) -- node[weight] {$\weight$} (\dest);
	\end{tikzpicture}
	\captionsetup{hypcap=false}
	\captionof{figure}{Network in Example 5.}
	\label{fig:example5}
\end{minipage}
\end{table}

The median $\mathcal{S}_m$ is

\vspace*{0.3cm}
\begin{minipage}{0.25\textwidth}
	\begin{tikzpicture}[scale=0.45, auto,swap]
		\foreach \pos/\name in {{(1.5,2.5)/1}, {(3,4)/2}}
		\node[vertex] (\name) at \pos {$\name$};
		\foreach \source/\dest in {2/1}
		\path[edge] (\source) -- (\dest);
	\end{tikzpicture}
\end{minipage}
\begin{minipage}{0.65\textwidth}
	$F_m(\mathcal{S}_m) = \frac{1}{115}(90\cdot 10 + 20\cdot55 + 5\cdot 92) = \frac{2460}{115} \approxeq 21.39$
	
	$F_c(\mathcal{S}_m)= 92$
\end{minipage}
\vspace{0.3cm}

The network has a unique center $\mathcal{S}_c$:

\vspace{0.3cm}
\begin{minipage}{0.25\textwidth}
	\begin{tikzpicture}[scale=0.45, auto,swap]
		\foreach \pos/\name in {{(1.5,2.5)/5}, {(3,4)/6}}
		\node[vertex] (\name) at \pos {$\name$};
		\foreach \source/\dest in {5/6}
		\path[edge] (\source) -- (\dest);
	\end{tikzpicture}
\end{minipage}
\begin{minipage}{0.65\textwidth}
	$F_m(\mathcal{S}_c) = \frac{1}{115}(90\cdot 70 + 20\cdot55 + 5\cdot 20) = \frac{7500}{115} \approxeq 65.21$
	
	$F_c(\mathcal{S}_c)= 70$
\end{minipage}
\vspace{0.3cm}

There is another subnetwork belonging to $\mathbfcal{PO}^{\alpha}_2$. The following network $\mathcal{S}_1$ has a better value for the average of the shortest paths than $\mathcal{S}_c$ and its center value is lower than in $\mathcal{S}_m$.

\vspace{0.3cm}
\begin{minipage}{0.25\textwidth}
	\begin{tikzpicture}[scale=0.45, auto,swap]
		\foreach \pos/\name in {{(1.5,2.5)/1}, {(3,4)/2}, {(3,1)/3}, {(5,4)/4}, {(5,1)/5}, {(6.5,2.5)/6}}
		\node[vertex] (\name) at \pos {$\name$};
		\foreach \source/\dest in {1/3, 3/2, 3/4, 5/4, 6/4}
		\path[edge] (\source) -- (\dest);
	\end{tikzpicture}
\end{minipage}
\begin{minipage}{0.65\textwidth}
	$F_m(\mathcal{S}_1) = \frac{1}{115}(90\cdot 60 + 20\cdot35 + 5\cdot 80) = \frac{6500}{115} \approxeq 56.52$
	
	$F_c(\mathcal{S}_1)= 80$
\end{minipage}
\vspace{0.3cm}

We can check that for any $0\leq\lambda\leq1$, $H_{\lambda}(\mathcal{S}_m)<H_{\lambda}(\mathcal{S}_1)$ or $H_{\lambda}(\mathcal{S}_c)<H_{\lambda}(\mathcal{S}_1)$. That is, even though $\mathcal{S}_1\in\mathbfcal{PO}^{\alpha}_2$, it cannot be an $\lambda$-cent-dian for any $0\leq\lambda\leq1$. This issue remains even if we use the weighted function $F_c^G(\,\cdot\,)$. We obtain that the weighted-center coincides with $\mathcal{S}_m$, since $F_c^G(\mathcal{S}_m)=55$, $F_c^G(\mathcal{S}_c)=70$ and $F_c^G(\mathcal{S}_1)=60$. The same solution is obtained whatever the value of $\lambda\in[0,1]$ is. It does not result in any compromise between the value of the parameter $\lambda$ and the functions used. That is, the solution does not change according to the different values that the parameter $\lambda$ takes.
\end{example}

The scenario presented in Example $5$ is infeasible when considering a tree network, as demonstrated in Proposition \ref{prop:tree}. In other words, when dealing with a tree network, such a compromise is achievable. In Proposition \ref{prop:tree}, without loss of generality, we refer to $\mathcal{S}_c$ as either the sole center or the center possessing the optimal value of $F_m(\cdot)$.

\begin{remark}
Note that the Pareto-optimal set of solutions $\mathcal{PO}_2^{\alpha}$ is composed of the solutions known in the literature as supported and non-supported. For example, see \cite{amorosi2022two}, \cite{pozo2024biobjective} and \cite{przybylski2010recursive}.
\end{remark}

\begin{proposition}\label{prop:tree}
	On a tree network $\mathcal{T}$, the set $\mathbfcal{PO}^{\alpha}_2$ is composed only by $\mathcal{S}_m$ and $\mathcal{S}_c$.
\end{proposition}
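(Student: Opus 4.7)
The plan is to prove the two inclusions separately. The forward inclusion $\{\mathcal{S}_m,\mathcal{S}_c\}\subseteq\mathbfcal{PO}_2^{\alpha}$ is immediate: $\mathcal{S}_m$ globally minimizes $F_m$ and, under the lexicographic convention, also minimizes $F_c$ among $F_m$-minimizers, so no feasible subgraph can weakly dominate it; the same argument works for $\mathcal{S}_c$. For the reverse inclusion I would argue by contradiction. Suppose $\mathcal{S}\in\mathbfcal{PO}_2^{\alpha}\setminus\{\mathcal{S}_m,\mathcal{S}_c\}$. Combining Pareto-incomparability of $\mathcal{S}$ with each of $\mathcal{S}_m$ and $\mathcal{S}_c$ with the fact that these are global minimizers of $F_m$ and $F_c$ respectively, one obtains the strict chain
\begin{equation*}
F_m(\mathcal{S}_m)<F_m(\mathcal{S})<F_m(\mathcal{S}_c),\qquad F_c(\mathcal{S}_c)<F_c(\mathcal{S})<F_c(\mathcal{S}_m).
\end{equation*}

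I would then exploit the tree structure. On a tree $\mathcal{T}$ every pair $w$ has a unique path $P(w)$, so $\ell_{\mathcal{S}}(w)\in\{d_{\mathcal{T}}(w),u^w\}$ is a binary quantity determined by whether $P(w)\subseteq\mathcal{S}$. Writing $C(\mathcal{S})=\{w\in W:P(w)\subseteq\mathcal{S}\}$, both $F_m$ and $F_c$ depend on $\mathcal{S}$ only through $C(\mathcal{S})$ and are monotonically non-increasing under set inclusion. The strict chain above then forbids any of the containments $C(\mathcal{S})\subseteq C(\mathcal{S}_m)$, $C(\mathcal{S})\supseteq C(\mathcal{S}_m)$, $C(\mathcal{S})\subseteq C(\mathcal{S}_c)$, $C(\mathcal{S})\supseteq C(\mathcal{S}_c)$, so $C(\mathcal{S})$ is incomparable to both. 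A case analysis on the pair $w^\ast$ realizing $\ell_{\mathcal{S}}(w^\ast)=F_c(\mathcal{S})$ forces $w^\ast\notin C(\mathcal{S})$, $u^{w^\ast}=F_c(\mathcal{S})$, $d_{\mathcal{T}}(w^\ast)<u^{w^\ast}$ and $w^\ast\in C(\mathcal{S}_c)$; symmetrically, the pair $w^{\ast\ast}$ realizing $F_c(\mathcal{S}_m)$ lies in $C(\mathcal{S})\setminus C(\mathcal{S}_m)$ with $u^{w^{\ast\ast}}=F_c(\mathcal{S}_m)$.

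The main obstacle, and the punch line, is to convert these structural facts into a contradiction via a tree-specific exchange. The natural attempt is to build $\mathcal{S}'=(\mathcal{S}\setminus P(w^{\ast\ast}))\cup P(w^\ast)$ and show that it is budget-feasible and strictly dominates $\mathcal{S}$ in $F_m$ or $F_c$. The delicate part is the edge-by-edge bookkeeping: edges of $P(w^{\ast\ast})$ may be shared with paths of other pairs in $C(\mathcal{S})$ and cannot be freely removed, while $P(w^\ast)\setminus E_{\mathcal{S}}$ may cost more than is freed; uniqueness of tree paths is what is meant to keep this accounting tight, in contrast to a graph with cycles, where an uncovered pair can still be reached via an alternative route (precisely the phenomenon that allows Example 5 to exhibit a third Pareto point). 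I expect this feasibility bookkeeping to be by far the hardest part, and I would back it up by an inductive scheme on the symmetric difference $|C(\mathcal{S})\triangle C(\mathcal{S}_c)|$, exchanging one pair at a time---removing a pair from $C(\mathcal{S})\setminus C(\mathcal{S}_c)$ or adding a pair from $C(\mathcal{S}_c)\setminus C(\mathcal{S})$---and invoking the tree structure to ensure each intermediate subgraph remains in $\mathbfcal{N}^\alpha$.
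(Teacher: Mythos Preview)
Your route differs markedly from the paper's. The paper gives a one-step argument: it fixes the single pair $w_0=\argmax_{w}\min\{d_{\mathcal T}(w),u^w\}$, claims that the strict chain $F_c(\mathcal S_c)<F_c(\mathcal S)<F_c(\mathcal S_m)$ forces the same strict chain on $\min\{d_{\cdot}(w_0),u^{w_0}\}$, and then observes that on a tree $d_{\mathcal S}(w_0)\in\{d_{\mathcal T}(w_0),+\infty\}$, so at most two distinct values of $\ell_{\cdot}(w_0)$ exist and three strictly ordered ones are impossible. There is no exchange step and no budget bookkeeping at all.

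The genuine gap in your proposal is exactly the step you yourself flag as hardest: the exchange $\mathcal S'=(\mathcal S\setminus P(w^{\ast\ast}))\cup P(w^{\ast})$ need not remain budget-feasible, and nothing in the tree structure forces the freed cost to cover the added cost; the inductive scheme you sketch does not repair this. In fact this gap cannot be closed, because the statement as written is false. Take the star with center $0$ and leaves $1,\dots,6$, all node costs $0$, all edge costs and lengths equal to $1$, budget $\alpha\,C_{\mathrm{total}}=4$, and $W=\{w_1=(1,2),\,w_2=(3,4),\,w_3=(5,6)\}$ with $(u^{w_1},u^{w_2},u^{w_3})=(100,50,10)$ and $(g^{w_1},g^{w_2},g^{w_3})=(1,10,10000)$. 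Any feasible subgraph serves at most two of the three pairwise edge-disjoint paths, and the three maximal choices yield
\[
(F_m,F_c)\in\Bigl\{\Bigl(\tfrac{100022}{10011},\,10\Bigr),\ \Bigl(\tfrac{20502}{10011},\,50\Bigr),\ \Bigl(\tfrac{20120}{10011},\,100\Bigr)\Bigr\},
\]
which are pairwise incomparable; hence $\mathbfcal{PO}^{\alpha}_2$ has three points while $\mathcal S_c$ and $\mathcal S_m$ are the first and the last. The same instance shows where the paper's short proof breaks: the passage from the $F_c$-chain to the $\ell_{\cdot}(w_0)$-chain is unjustified, because the pair attaining the maximum in $F_c(\mathcal S)=\max_w\ell_{\mathcal S}(w)$ changes with $\mathcal S$ (here it is $w_3$, then $w_2$, then $w_1$). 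So your worry about the bookkeeping being ``by far the hardest part'' is well placed---it is, in general, impossible.
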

\begin{proof} 
	Let $w_0\in W$ be the O/D pair which satisfies that $w_0=\arg\max\limits_{w\in W}\{\min\{d_{\mathcal{T}}(w),u^w\}\}$. We will assume that $d_{\mathcal{T}}(w_0)< u^w$ and $(\tilde{N}^{w_0},\tilde{E}^{w_0})\in\mathbfcal{T}^{\alpha}$ (its associated path is a feasible graph). Let us prove by \textit{Reductio ad absurdum}. Thus, let us suppose that there exists $\mathcal{S}\in \mathbfcal{PO}^{\alpha}_2$ such that $\mathcal{S}\neq\mathcal{S}_c$ and $\mathcal{S}\neq\mathcal{S}_m$. Therefore, $F_m(\mathcal{S}_m)<F_m(\mathcal{S})<F_m(\mathcal{S}_c)$ and $F_c(\mathcal{S}_c)<F_c(\mathcal{S})<F_c(\mathcal{S}_m)$, since if $F_m(\mathcal{S})=F_m(\mathcal{S}_c)$ or $F_c(\mathcal{S})=F_c(\mathcal{S}_m)$ then $\mathcal{S}\notin \mathbfcal{PO}^{\alpha}_2$.
	With regard to $F_c(\mathcal{S}_c)<F_c(\mathcal{S})<F_c(\mathcal{S}_m)$, it means that 
	$$\min\{d_{\mathcal{S}_c}(w_0),u^{w_0}\}< \min\{d_{\mathcal{S}}(w_0),u^{w_0}\} < \min\{d_{\mathcal{S}_m}(w_0),u^{w_0}\}.$$
	In order to satisfy the previous inequality, we have to set $$\min\{d_{\mathcal{S}_c}(w_0),u^{w_0}\}=d_{\mathcal{S}_c}(w_0) \text{ and }
	\min\{d_{\mathcal{S}}(w_0),u^{w_0}\}=d_{\mathcal{S}}(w_0).$$ Then,
	$$d_{\mathcal{T}}(w_0)\leq d_{\mathcal{S}_c}(w_0)< d_{\mathcal{S}}(w_0) < \min\{d_{\mathcal{S}_m}(w_0),u^{w_0}\},$$
	which is not possible since in a tree network the path for each O/D pair $w\in W$ is unique since there are no cycles. 
	Note that if $d_{\mathcal{T}}(w_0)\geq u^w$, then $F_c(\mathcal{S}_c)=F_c(\mathcal{S})=F_c(\mathcal{S}_m)=u^{w_0}$. Besides, since $F_m(\mathcal{S}_m)<F_m(\mathcal{S})$ and $F_m(\mathcal{S}_m)<F_m(\mathcal{S}_c)$, then $\mathcal{S}, \mathcal{S}_c\notin\mathbfcal{PO}^{\alpha}_2$.
\end{proof}

As in \cite{ogryczak1997centdians}, with the purpose of identifying some compromise $\lambda$-cent-dians on a general network, we need a solution concept different from the one discussed so far. As explained for the case of a nonconvex problem in Location Theory (see \cite{steuer1986multiple}), the set of Pareto-optimality concerning the bi-criteria center/median objective can be completely parametrized through the minimization of the weighted Chebychev norm. In the Network Design area, the Pareto-optimality set $\mathbfcal{PO}^{\alpha}_2$ can be completely parametrized through the minimization of the weighted function
\begin{equation}\label{eq:chevfunc}
	\Bar{H}_{\lambda}(\mathcal{S}) = \max\{\lambda\,F_c(\mathcal{S}),(1-\lambda)\,F_m(\mathcal{S})\}.
\end{equation}

In the case of a non-unique optimal solution, this optimization has to be subject to a second stage. Thus, we call a subgraph $\mathcal{S}\in\mathbfcal{N}^{\alpha}$ a \textit{maximum $\lambda$-cent-dian} if it is an optimal solution of the following lexicographic (two-level) problem
\begin{equation}\label{eq:lexmin2}
	\mbox{lex}\min\{[\Bar{H}_{\lambda}(\mathcal{S}),H_{\lambda}(\mathcal{S})]:\,\,\mathcal{S}\in\mathbfcal{N}^{\alpha}\}.
\end{equation}

The lexicographic minimization in \eqref{eq:lexmin2} means that first we minimize $\Bar{H}_{\lambda}(\,\cdot\,)$ on set $\mathbfcal{N}^{\alpha}$, and then we minimize $H_{\lambda}(\,\cdot\,)$ on the optimal subnetworks set of function $\Bar{H}_{\lambda}(\, \cdot \,)$. Thus, function $H_{\lambda}(\,\cdot\,)$ is minimized only for regulation purposes, in the case of a nonunique minimum solution for the main function $\Bar{H}_{\lambda}(\, \cdot \,)$. If the optimal solution is not unique, this regularization is necessary to guarantee that the maximum $\lambda$-cent-dian always belongs to $\mathbfcal{PO}^{\alpha}_2$. Besides, each subgraph $\mathcal{S}\in\mathbfcal{PO}^{\alpha}_2$ can be found as a maximum $\lambda$-cent-dian with $0\leq\lambda\leq1$. The proofs of these last two statements are detailed below in Propositions \ref{prop:PO_maximum} and \ref{prop:maximum_PO}. By construction of the functions used, they are similar to the ones exposed for Propositions 3 and 4 in \cite{ogryczak1997centdians} for Location Theory.

\begin{proposition}\label{prop:PO_maximum}
	For each $\lambda\in(0,1)$, the corresponding maximum $\lambda$-cent-dian belongs to $\mathbfcal{PO}^{\alpha}_2$.
\end{proposition}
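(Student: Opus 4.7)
The plan is to follow the same \emph{Reductio ad absurdum} strategy used for Proposition \ref{remark:centdian_pO2}, but with the extra subtlety that the defining program \eqref{eq:lexmin2} is a two-level lexicographic minimization: the candidate dominator may tie on the primary objective $\bar H_\lambda$ and only beat the incumbent at the regularizing second stage $H_\lambda$. So the contradiction must be drawn on whichever of the two levels is actually violated.

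Concretely, suppose $\mathcal{S}$ is a maximum $\lambda$-cent-dian for some $\lambda\in(0,1)$ but $\mathcal{S}\notin \mathbfcal{PO}^{\alpha}_2$. Then there exists $\mathcal{S}'\in\mathbfcal{N}^\alpha$ with $F_m(\mathcal{S}')\le F_m(\mathcal{S})$ and $F_c(\mathcal{S}')\le F_c(\mathcal{S})$, one strict. Since $\lambda>0$ and $1-\lambda>0$, multiplying these two inequalities by $\lambda$ and $1-\lambda$ respectively and taking the maximum yields
\[
\bar H_\lambda(\mathcal{S}')=\max\{\lambda F_c(\mathcal{S}'),(1-\lambda)F_m(\mathcal{S}')\}\le \max\{\lambda F_c(\mathcal{S}),(1-\lambda)F_m(\mathcal{S})\}=\bar H_\lambda(\mathcal{S}),
\]
and summing them gives the strict inequality $H_\lambda(\mathcal{S}')<H_\lambda(\mathcal{S})$, exactly as in the proof of Proposition \ref{remark:centdian_pO2}.

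I would then split into two cases. If $\bar H_\lambda(\mathcal{S}')<\bar H_\lambda(\mathcal{S})$, then $\mathcal{S}$ does not minimize the first-stage objective of \eqref{eq:lexmin2}, contradicting that it is a maximum $\lambda$-cent-dian. Otherwise $\bar H_\lambda(\mathcal{S}')=\bar H_\lambda(\mathcal{S})$, so $\mathcal{S}'$ is feasible for the second-stage minimization over the arg-min set of $\bar H_\lambda$; but then $H_\lambda(\mathcal{S}')<H_\lambda(\mathcal{S})$ contradicts optimality of $\mathcal{S}$ at the second stage. Either way we reach a contradiction, so $\mathcal{S}\in\mathbfcal{PO}^{\alpha}_2$.

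The only thing that requires care is the role of the regularizing second level: without it one could not rule out the tied case, and this is precisely why \eqref{eq:lexmin2} is stated lexicographically rather than as a single minimization of $\bar H_\lambda$. The hypothesis $\lambda\in(0,1)$ is used twice and is essential, since both weights must be strictly positive to turn a weak Pareto-domination into a strict drop in $H_\lambda$.
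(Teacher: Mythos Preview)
Your proof is correct and follows essentially the same \emph{Reductio ad absurdum} route as the paper: assume a Pareto-dominating $\mathcal{S}'$, push the two componentwise inequalities through $\bar H_\lambda$ and $H_\lambda$, and contradict the lexicographic optimality \eqref{eq:lexmin2}. Your version is in fact slightly more explicit than the paper's, which simply states $\bar H_\lambda(\mathcal{S}')\le\bar H_\lambda(\mathcal{S})$ and $H_\lambda(\mathcal{S}')\le H_\lambda(\mathcal{S})$ and declares a contradiction; your observation that the strictness survives in $H_\lambda$ (because both weights are positive) together with the two-case split on whether $\bar H_\lambda$ ties is exactly what makes that contradiction rigorous for a lexicographic program.
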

\begin{proof}
	Let $\mathcal{S}\in\mathbfcal{N}^{\alpha}$ be a maximum $\lambda$-cent-dian for some $\lambda\in(0,1)$. Let us prove by \textit{Reductio ad absurdum}. That is, suppose that $\mathcal{S}\notin\mathbfcal{PO}^{\alpha}_2$. This means that there exists $\mathcal{S}'\in\mathbfcal{N}^{\alpha}$ such that
	$$F_c(\mathcal{S}')\leq F_c(\mathcal{S})\quad\text{ and }\quad F_m(\mathcal{S}')\leq F_m(\mathcal{S}),$$
	where at least one of the inequalities is satisfied strictly. Hence, since $\lambda\in(0,1)$,
	$$\Bar{H}_{\lambda}(\mathcal{S}')\leq \Bar{H}_{\lambda}(\mathcal{S})\quad\text{ and }\quad H_{\lambda}(\mathcal{S}')\leq H_{\lambda}(\mathcal{S}),$$
	which contradicts the fact that $\mathcal{S}$ is a maximum $\lambda$-cent-dian. Thus, a maximum $\lambda$-cent-dian solution network always belongs to $\mathbfcal{PO}^{\alpha}_2$.
\end{proof}

\begin{proposition}\label{prop:maximum_PO}
	For each $\mathcal{S}\in\mathbfcal{PO}^{\alpha}_2$ there exists $\lambda\in(0,1)$ such that $\mathcal{S}$ is the corresponding maximum $\lambda$-cent-dian.
\end{proposition}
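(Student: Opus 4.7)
The plan is to mimic the weighted Chebychev parametrization argument used by Ogryczak and to pick the weight that equalizes the two terms inside $\Bar{H}_{\lambda}$. Concretely, given $\mathcal{S}\in \mathbfcal{PO}^{\alpha}_2$, I would set
\[
\lambda \;=\; \frac{F_m(\mathcal{S})}{F_c(\mathcal{S})+F_m(\mathcal{S})},
\]
so that $\lambda F_c(\mathcal{S})=(1-\lambda)F_m(\mathcal{S})$ and hence $\Bar{H}_{\lambda}(\mathcal{S})=\lambda F_c(\mathcal{S})=(1-\lambda)F_m(\mathcal{S})$. Under the standing assumption that $F_c(\mathcal{S})$ and $F_m(\mathcal{S})$ are both strictly positive (otherwise $\mathcal{S}$ would be an ideal point and the claim is trivial), this $\lambda$ lies in $(0,1)$.

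Next I would verify that $\mathcal{S}$ minimizes $\Bar{H}_{\lambda}$ over $\mathbfcal{N}^{\alpha}$ by a short case split on an arbitrary competitor $\mathcal{S}'\in\mathbfcal{N}^{\alpha}$. If $F_c(\mathcal{S}')\geq F_c(\mathcal{S})$, then $\lambda F_c(\mathcal{S}')\geq \lambda F_c(\mathcal{S})=\Bar{H}_{\lambda}(\mathcal{S})$, so $\Bar{H}_{\lambda}(\mathcal{S}')\geq \Bar{H}_{\lambda}(\mathcal{S})$. Otherwise $F_c(\mathcal{S}')<F_c(\mathcal{S})$, and Pareto-optimality of $\mathcal{S}$ forces $F_m(\mathcal{S}')>F_m(\mathcal{S})$, which gives $(1-\lambda)F_m(\mathcal{S}')>(1-\lambda)F_m(\mathcal{S})=\Bar{H}_{\lambda}(\mathcal{S})$, so again $\Bar{H}_{\lambda}(\mathcal{S}')>\Bar{H}_{\lambda}(\mathcal{S})$. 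Thus $\mathcal{S}$ solves the first stage of the lexicographic program \eqref{eq:lexmin2}.

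The only remaining step, and the subtle one, is to handle the second-stage regularization: I must ensure that some other $\mathcal{S}'$ attaining $\Bar{H}_{\lambda}(\mathcal{S}')=\Bar{H}_{\lambda}(\mathcal{S})$ cannot strictly beat $\mathcal{S}$ in $H_{\lambda}$. Here I would argue that any such $\mathcal{S}'$ must satisfy $\lambda F_c(\mathcal{S}')\leq \lambda F_c(\mathcal{S})$ and $(1-\lambda)F_m(\mathcal{S}')\leq (1-\lambda)F_m(\mathcal{S})$, i.e.\ $F_c(\mathcal{S}')\leq F_c(\mathcal{S})$ and $F_m(\mathcal{S}')\leq F_m(\mathcal{S})$. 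Since $\mathcal{S}\in \mathbfcal{PO}^{\alpha}_2$, both inequalities must in fact be equalities, and therefore $H_{\lambda}(\mathcal{S}')=H_{\lambda}(\mathcal{S})$. Hence $\mathcal{S}$ is an optimal solution of the lex-min problem \eqref{eq:lexmin2}, i.e.\ a maximum $\lambda$-cent-dian.

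The main obstacle I anticipate is exactly this regularization step: without invoking the Pareto-optimality of $\mathcal{S}$ in $\mathbfcal{PO}^{\alpha}_2$, one could easily construct a tie in $\Bar{H}_{\lambda}$ that breaks in favor of another subgraph in $H_{\lambda}$, which is precisely why the second-level minimization was introduced in \eqref{eq:lexmin2}. The Pareto-optimality of $\mathcal{S}$ is what converts the weak inequalities obtained from the tie into equalities, closing the argument.
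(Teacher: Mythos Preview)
Your proof is correct and follows essentially the same approach as the paper: both choose $\lambda = F_m(\mathcal{S})/(F_c(\mathcal{S})+F_m(\mathcal{S}))$ to equalize the two terms in $\Bar{H}_{\lambda}$, and then use Pareto-optimality of $\mathcal{S}$ to conclude. The paper argues by contradiction and treats the lexicographic structure somewhat implicitly, whereas you handle the first stage by a direct case split and then explicitly close the second-stage tie-breaking; but the underlying ideas are identical.
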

\begin{proof}
	Let us consider $\mathcal{S}\in\mathbfcal{PO}^{\alpha}_2$ and $\lambda=F_m(\mathcal{S})/(F_c(\mathcal{S})+F_m(\mathcal{S}))$. Observe that $\lambda\in(0,1)$ and $1-\lambda=F_c(\mathcal{S})/(F_c(\mathcal{S})+F_m(\mathcal{S}))$. Then,
	\begin{equation}\label{eq:prop_maximum_PO}
		\Bar{H}_{\lambda}(\mathcal{S})=F_c(\mathcal{S})F_m(\mathcal{S})/(F_c(\mathcal{S})+F_m(\mathcal{S}))=\lambda F_c(\mathcal{S})=(1-\lambda)F_m(\mathcal{S}).
	\end{equation}
	Let us prove it by \textit{Reductio ad absurdum}. Let us suppose that $\mathcal{S}$ is not the corresponding maximum $\lambda$-cent-dian. This means that there exists $\mathcal{S}'\in \mathbfcal{N}^{\alpha}$ such that
	$$\Bar{H}_{\lambda}(\mathcal{S}')\leq \Bar{H}_{\lambda}(\mathcal{S})\quad\text{ and }\quad H_{\lambda}(\mathcal{S}')\leq H_{\lambda}(\mathcal{S}),$$
	where at least one of the inequalities is satisfied strictly. That is,
	\begin{equation}
		\lambda F_c(\mathcal{S}')\leq \Bar{H}_{\lambda}(\mathcal{S})\quad\text{ and }\quad (1-\lambda)F_m(\mathcal{S}')\leq H_{\lambda}(\mathcal{S}),
	\end{equation}
	where at least one of the inequalities has to be satisfied strictly. By equation \eqref{eq:prop_maximum_PO}, it would mean that $\mathcal{S}\notin\mathbfcal{PO}^{\alpha}_2$. Thus, $\mathcal{S}$ has to be the corresponding maximum $\lambda$-cent-dian.
\end{proof}

Regarding Example $5$, we have shown that the subnetwork $\mathcal{S}_1$ cannot be a $\lambda$-cent-dian for any $0\leq\lambda\leq1$ since $H_{\lambda}(\mathcal{S}_m)< H_{\lambda}(\mathcal{S}_1)$ or $H_{\lambda}(\mathcal{S}_c)< H_{\lambda}(\mathcal{S}_1)$. Note that $\Bar{H}_{\lambda}(\mathcal{S}_1)=\max\{80\,\lambda,56.52\,(1-\lambda)\}$, $\Bar{H}_{\lambda}(\mathcal{S}_c)=\max\{70\,\lambda,65.21\,(1-\lambda)\}$ and $\Bar{H}_{\lambda}(\mathcal{S}_m)=\max\{92\,\lambda,21.39\,(1-\lambda)\}$. Hence, $\Bar{H}_{\lambda}(\mathcal{S}_1)< \Bar{H}_{\lambda}(\mathcal{S}_c)$ and $\Bar{H}_{\lambda}(\mathcal{S}_1)< \Bar{H}_{\lambda}(\mathcal{S}_m)$ for any $0.4467<\lambda<0.4491$. In fact, subnetwork $\mathcal{S}_1$ is the maximum $\lambda$-cent-dian, for $\lambda \in (0.4467, \,0.4491)$.

As a conclusion, similar to the $\lambda$-cent-dian concept, the maximum $\lambda$-cent-dian generates the solution network depending on the value of $\lambda\in(0,1)$. Nevertheless, the difference between both concepts is that the maximum $\lambda$-cent-dian allows us to model all the existing compromises between $F_c(\,\cdot\,)$ and $F_m(\,\cdot\,).$ 

In the following example, we identify the $\mathbfcal{PO}_2^{\alpha}$ set for a given network.

\begin{examplecont}{5 cont}
%\paragraph{Example 5 cont}
Regarding the instance of Example 5 but considering $\alpha C_{total}=90$, the set of 17 points in Figure \ref{fig:Pareto_optimality} represents the whole set $\mathcal{N}^{\alpha}$. That is, solutions whose construction cost is less than or equal to $\alpha\,C_{total}$. We have highlighted the only three solutions that are not dominated. They form the $\mathcal{PO}_2^{\alpha}$ set. Their construction cost is larger than or equal to the ones that are dominated. They are the corresponding maximum $\lambda$-cent-dians for $\lambda\in[0.0001,0.3805]$, $\lambda\in[0.3806,0.4354]$ and $\lambda\in[0.4355,0.9999]$. 

Observing the whole set of feasible solutions, we identify that solutions labeled 2 and 7 are both center solutions. The difference is that in solution 2 we require a small percentage of efficiency with constraint \eqref{eq:delta_constraint}. As this percentage grows, we will consequently obtain solution points 3, 6, 10 and 4. With respect to median solutions, point 4 is the most efficient one, it represents the median solution. 
Regarding the generalized-center solution, point 7 corresponds to the solution that has the smallest difference between the values of the center and the median. In this case, the generalized-center corresponds with the less efficient center. We can require also a percentage of efficiency for the generalized-center solution. As this percentage grows, we will consequently obtain solutions points 2, 3, 6, 10 and 4. The rest of the points are simply feasible solution networks whose cost is strictly less than the available budget. So, they do not correspond to the optimal solution for any of the three concepts studied.
\end{examplecont}

\begin{figure}[ht]
	\centering
	\hspace*{-0.3cm}\includegraphics[scale=0.4]{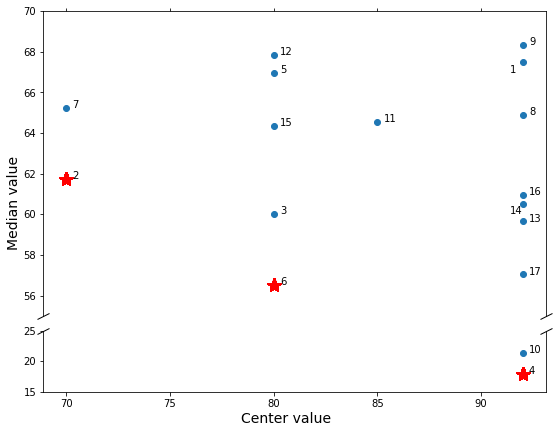}
	\caption{Identification of Pareto-optimality solutions for the given network of Example 5.}
	\label{fig:Pareto_optimality}
\end{figure}

\subsection{Adding efficiency to the generalized-center problem.}\label{subsec:efficiency}

In our prior analysis, we observed peculiarities in the solutions of the generalized-center concerning to the center and median functions. When the search domain for the generalized-center is confined to the set $\mathcal{PO}^{\alpha}_2$, it aligns with the least efficient center network. We study the effect of imposing constraints that limit the degree of inefficiency of the sub-network, by adding the following constraint: 
\begin{equation}\label{eq:delta_constraint}	F_m(\mathcal{S}^*)\leq(1+\Delta)\,F_m(\mathcal{S}_m), \quad\text{ with } \Delta\geq 0. 
\end{equation}

By setting $\Delta=0$, we ensure that the median objective value in $\mathcal{S}^*$ is precisely equivalent to that in $\mathcal{S}_m$. Note that, by definition, $F_m(\mathcal{S}_m)\leq F_m(\mathcal{S}), \forall,\mathcal{S}\in\mathbfcal{N}^{\alpha}$. When $\Delta\in(0,1)$, we concede a certain percentage of efficiency. In other words, the median objective value in the optimal solution network can exceed the median network value by a specified percentage. Lastly, if $\Delta\geq1$, we permit the efficiency value to surpass that of the median network by a multiple.

\section{Complexity discussion}\label{sec:complexity}

The problems addressed in this paper are NP-hard. The objective functions of the $\lambda$-cent-dians and generalized center problems are linear combinations of the median and center functions. Thus, if both network design problems are NP-hard, then so are the $\lambda$-cent-dians and generalized center problems. In Propositions  \ref{prop:medianNPhardV2} and \ref{prop:centerNPhardV2}  we show that the p-median and p-center problems are special cases of median and center subnetwork problems. It is known that the p-median and p-center problems are NP-hard (see \cite{kariv1979algorithmic_center} and \cite{kariv1979algorithmic_median}).

\begin{proposition}\label{prop:medianNPhardV2}
	The problem of finding a median subnetwork from a given network is NP-hard.
\end{proposition}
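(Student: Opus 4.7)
My plan is to establish NP-hardness by a polynomial-time reduction from the classical $p$-median problem on a graph, which the paper cites as NP-hard. The intuition is to engineer the median subnetwork instance so that the edge-cost budget exactly controls how many ``facility'' vertices can be connected to a common sink, after which the median objective collapses to the $p$-median cost.

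Given a $p$-median instance on $G=(V,E)$ with nonnegative edge lengths, positive node weights $\{w_i\}_{i\in V}$, and integer $p$, I would first compute all pairwise shortest-path distances $d_G(u,v)$ in $G$, and then build the potential network $\mathcal{N}$ with node set $N = V\cup\{s\}$ for a new sink $s$ and all node costs set to $0$. The edges are of two kinds: a \emph{base} edge of length $d_G(u,v)$ and cost $0$ for each unordered pair $u\neq v$ in $V$, and a \emph{linking} edge $\{s,v\}$ of length $0$ and cost $1$ for each $v\in V$. I take $W=\{(i,s):i\in V\}$ with demands $g^{(i,s)}=w_i$, private utilities $u^{(i,s)}=M$ for some $M$ strictly larger than the diameter of $G$, and budget $\alpha\,C_{total}=p$.

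The core of the argument is verifying that minimizing $F_m$ on this $\mathcal{N}$ is, up to the positive multiplicative factor $1/G$, the $p$-median objective. Since all nodes and base edges are free, in an optimal subnetwork $\mathcal{S}$ they are all included, and the budget restricts the number of linking edges to at most $p$; let $S\subseteq V$ be the set of $v$ for which $\{s,v\}\in E_{\mathcal S}$. Because the base-edge lengths coincide with shortest-path distances in $G$ and therefore satisfy the triangle inequality, the shortest $i$-to-$s$ path inside $\mathcal{S}$ must be of the form $i\to v\to s$ for some $v\in S$, yielding $d_{\mathcal S}(i,s)=\min_{v\in S}d_G(i,v)$. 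Choosing $M$ above the diameter of $G$ guarantees that the private utility never binds when $S\neq\emptyset$ and that $S=\emptyset$ is strictly suboptimal. Hence $F_m(\mathcal{S}) = \tfrac{1}{G}\sum_{i\in V}w_i\min_{v\in S}d_G(i,v)$, which is the $p$-median objective at $S$ up to the positive factor $1/G$; positivity of the $w_i$'s makes $|S|=p$ tight at any optimum.

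The construction has $|V|+1$ nodes and $O(|V|^2)$ edges, so it is polynomial in the size of the $p$-median input, and the correspondence preserves optimal values. I do not foresee a genuine obstacle; the only technical subtleties are the triangle-inequality argument ruling out indirect shortest paths inside $\mathcal{S}$ and the choice of $M$ that makes the private-utility bound inactive, both of which are routine once the construction is in place. Any polynomial-time algorithm for the median subnetwork problem would then solve $p$-median in polynomial time, completing the reduction.
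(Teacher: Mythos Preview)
Your proposal is correct and follows essentially the same reduction as the paper: add a sink node, connect every demand vertex to it by a length-$0$, cost-$1$ edge, make all demand-to-demand edges free with lengths equal to the given distances, take the O/D pairs $(i,\text{sink})$, and set the budget to $p$. The only cosmetic differences are that you start from a graph and first compute shortest-path distances (the paper starts directly from a distance matrix), you allow weighted demands rather than unit demands, and you make the triangle-inequality step explicit where the paper leaves it implicit.
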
 

\begin{proof}
	We show that the p-Median Problem reduces to the problem of finding a median subnetwork. 
	
	Given a set $I$ of demand nodes, let denote by $d_{ij} \geq 0$ the distance between demand nodes $i, j \in I$, with $d_{ij}=0$, if $i=j$. The decision version of the p-Median problem consists in determining a subset $S \subseteq I$, such that (i) $ |S|=p $, and (ii) $f_m(S) = \sum_{i \in I}\min_{j\in S}d_{ij} \leq K$.
	
	The corresponding median subnetwork problem is obtained as follows. 
	
	Regarding the network, the node set $N = I\cup \{t\}$ and the edge set $E = \{(i, j): i,j  \in I\cup \{t\}\}$. The costs $c_{it}= 1$ for all nodes $i \in I$ and the cost of all the other edges (linking pairs of nodes $i,j \in I$) are equal to zero.  The  cost $b_i =0$ for all nodes $i \in I\cup \{t\}$. The length of the edges linking pair of nodes i and j from $I$ is given by $d_{ij}$ and we set $d_{it}=0$ for all $i \in I$. 
	
	The set of O/D pairs $W = \{ (i, t): i \in I\}$ and each pair $w$ has a unit demand, $g^w = 1$ and an arbitrary large private utility, $u^w >> 1$.
	It is easy to see that there exists a subset  $S$ satisfying the above conditions (i) and (ii) if and only if there exists a  subnetwork $S'$ with a cost lower than or equal to $\alpha\, C_{total} = p$ and $F_m(S') \leq K$. In fact, each node that is selected as a median incurs in a cost equal to 1, and all the nodes that are connected to these median points are connected at cost zero. The network $S'$ contains all nodes in $I\cup \{t\}$, all edges between pairs of nodes in $I$ and $p$ edges linking the nodes in $S$ to the destination node $t$. Further, $f_m(S) = F_m(S')$.
\end{proof}

\begin{proposition}\label{prop:centerNPhardV2}
	The problem of finding a center subnetwork from a given network is NP-hard.
\end{proposition}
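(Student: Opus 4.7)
The plan is to mirror the argument of Proposition \ref{prop:medianNPhardV2} almost verbatim, only swapping the source problem from $p$-Median to $p$-Center. Since $p$-Center is known to be NP-hard (\cite{kariv1979algorithmic_center}), a polynomial reduction from its decision version to the decision version of the center subnetwork problem will suffice.

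First, I would recall the decision version of $p$-Center: given a set $I$ of demand nodes with pairwise distances $d_{ij}\ge 0$ (with $d_{ii}=0$), and integers $p$ and $K$, decide whether there exists $S\subseteq I$ with $|S|=p$ such that $f_c(S)=\max_{i\in I}\min_{j\in S}d_{ij}\le K$. Next I would construct the same auxiliary network used in the median proof: a node set $N=I\cup\{t\}$, an edge between every pair of nodes in $I\cup\{t\}$, node costs $b_i=0$, edge costs $c_{it}=1$ for $i\in I$ and $c_e=0$ for edges between nodes of $I$, edge lengths $d_{ij}$ on edges inside $I$ and $d_{it}=0$ for every $i\in I$. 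The O/D pair set is $W=\{(i,t):i\in I\}$ with unit demands $g^w=1$ and private utilities $u^w$ chosen arbitrarily large so that they never bind. Finally I set the budget to $\alpha\, C_{total}=p$.

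The core of the proof is then to show that a feasible subnetwork $\mathcal{S}'$ with $F_c(\mathcal{S}')\le K$ exists if and only if a $p$-Center solution $S$ with $f_c(S)\le K$ exists. The key observations are: (i) since only the edges $(i,t)$ have positive cost and each costs $1$, any feasible subnetwork can include at most $p$ such edges, and without loss of generality we may assume it contains all zero-cost edges among nodes of $I$; (ii) if $S\subseteq I$ is the set of nodes joined to $t$ in $\mathcal{S}'$, then the shortest path from any $i\in I$ to $t$ in $\mathcal{S}'$ has length $\min_{j\in S}d_{ij}$, because one can travel along a zero-length edge (actually of length $d_{ij}$) from $i$ to some $j\in S$ and then along the zero-length edge $(j,t)$; (iii) therefore $F_c(\mathcal{S}')=\max_{i\in I}\min_{j\in S}d_{ij}=f_c(S)$. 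Combining this with the budget constraint gives the equivalence, and since the reduction is polynomial, NP-hardness transfers.

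The main obstacle, as in the median case, is simply to justify cleanly that restricting attention to subnetworks containing exactly $p$ of the unit-cost edges (together with all free edges) loses no generality: adding any zero-cost edge never increases $F_c$, and including more than $p$ unit-cost edges violates the budget, while including fewer only increases $F_c$ (or keeps it equal, in which case one can pad out to exactly $p$ centers). Once this is established, the calculation $F_c(\mathcal{S}')=f_c(S)$ is immediate, and the proof concludes exactly in the style of Proposition \ref{prop:medianNPhardV2}.
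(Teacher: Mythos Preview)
Your proposal is correct and follows exactly the paper's approach: the paper's own proof consists of two sentences stating that the $p$-center problem reduces to the center subnetwork problem via the very same construction used in Proposition~\ref{prop:medianNPhardV2}. You have simply spelled out in detail what the paper leaves implicit, and the equivalence $F_c(\mathcal{S}')=f_c(S)$ you derive is precisely the intended conclusion.
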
 

\begin{proof}
	The p-center problem reduces to the center subnetwork problem. The reduction is exactly the same as that of the median version given in the previous proposition.
\end{proof}

Propositions 6 and 7 are evidence that solving this family of problems is hard. In \cite{bucarey2024on2}, we will investigate numerical algorithms to solve the problems exposed in this article. 

\section{Applications}\label{sec:applications}

In this section, we present a real-world application based on the concepts discussed earlier. The example involves a social planner who aims to design a metro network accessible to potential riders via pedestrian pathways. Riders typically travel to their nearest station, but they are unwilling to walk beyond a certain distance. To address this, a pedestrian walking threshold, denoted by $k \geq 0$, is established to limit the maximum walking distance to access the stations.

For each subnetwork $\mathcal{S}$ and pair $w\in W,$ $w\notin N_{\mathcal{S}}\times N_{\mathcal{S}},$ let $i^s_{\mathcal{S}}=\argmin_{i\in \mathcal{S}} d_{\mathcal{N}}(i,w^s), \mbox{ and }i^t_{\mathcal{S}}=\argmin_{i\in \mathcal{S}} d_{\mathcal{N}}(i,w^t),$ be the closest nodes in $\mathcal{S}$ to $w^s$ and $w^t$, respectively. If $\max\{d_{\mathcal{N}}(i^s_{\mathcal{S}},\mathcal{S}), d_{\mathcal{N}}(i^t_{\mathcal{S}},\mathcal{S})\}\le k,$ then $d'_{\mathcal{S}}(w)$ is that given by $d_{\mathcal{N}}(w^s,i^s_{\mathcal{S}})$ plus the shortest path between $i^s_{\mathcal{S}}$ and $i^t_{\mathcal{S}}$ plus $d_{\mathcal{N}}(i^t_{\mathcal{S}},w^t)$.
Finally, the general definition of the extended distance through $\mathcal{S}$ follows:
$$ d'_{\mathcal{S}}(w)= \left\{  \begin{array}{ll} d_{\mathcal{S}}(w), 
 &\mbox{ if }  w^s, w^t\in  {\mathcal{S}},  \\
\beta\, d_{\mathcal{N}}(i^s_{\mathcal{S}},w^s)+ d_{\mathbfcal{S}}(i^s_{\mathcal{S}},w^t), &\mbox{ if } w^s\notin \mathcal{S}, w^t\in \mathcal{S} \mbox{ and } d_{\mathcal{N}}(i^s_{\mathcal{S}}, w^s)\le k, \\
\beta\,d_{\mathcal{N}}(i^t_{\mathcal{S}},w^t)+ d_{\mathcal{S}}(i^t_{\mathcal{S}},w^s), &\mbox{ if } w^t\notin \mathcal{S}, w^s\in \mathcal{S} \mbox{ and } d_{\mathcal{N}}(i^t_{\mathcal{S}}, w^t)\le k,\\ 
\begin{split}
    & \beta\, d_{\mathcal{N}}(i^s_{\mathcal{S}},w^s)+ d_{\mathcal{S}}(i^s_{\mathcal{S}},i^t_{\mathcal{S}})+ \\
    & +\beta\,d_{\mathcal{N}}(i^t_{\mathcal{S}},w^t),
\end{split}& \begin{split}
    & \mbox{ if }  
w^s, w^t\notin \mathcal{S}, \mbox{ and }\\  & \max\{d_{\mathcal{N}}(i^s_{\mathcal{S}}, w^s),d_{\mathcal{N}}(i^t_{\mathcal{S}},w^t) \}\le k,
\end{split} \\
+\infty, & \mbox{ otherwise.}
\end{array} \right.$$

Let us remark, that the parameter $k$ can be large enough to reach all demand pairs, and that by introducing a parameter $\beta > 1$ we can penalize the pedestrian branches of the routes between the elements of each pair.

We show an example in Figure \ref{fig:application}. The thick line represents the design of the metro network. Then, there are only four metro stations. The rest of the nodes represent the set of potential centroids. For simplicity, we consider that the set of O/D pairs is composed of $w_1=(1,3)$, $w_2=(6,3)$, $w_3=(6,7)$ and $w_4=(8,4)$. If we set $k=35$ and $\beta=2$, the dashed line represents the feasible paths to reach some origin centroids in the metro network by the pedestrian mode. The numeric quantity in the nodes refers to their labels, and those in the edges are distances. It is known that $u^{w_1}=70$, $u^{w_2}=70$, $u^{w_3}=80$ and $u^{w_4}=80$. By evaluating $d'_{\mathcal{S}}(\cdot)$ for them we obtain that:
\begin{itemize}
    \item $d'_{\mathcal{S}}(w_1)= +\infty$ because $\beta\, d_{\mathcal{N}}(1,2)>k$. Then, it cannot access the metro network, $\ell_{\mathcal{S}}(w_1)= \min\{+\infty,70\}=70$ and it is not covered.
    \item $d'_{\mathcal{S}}(w_2)=\beta\, d_{\mathcal{N}}(6,5)+d_{\mathcal{S}}(5,4)+d_{\mathcal{S}}(4,3)=75$. You can check that there is another feasible path, but longer than the chosen one. Then, $\ell_{\mathcal{S}}(w_2)= \min\{75,70\}=70$ and it is not covered.
    \item $d'_{\mathcal{S}}(w_3)= \beta\, d_{\mathcal{N}}(6,5)+d_{\mathcal{S}}(5,4)+\beta\,d_{\mathcal{N}} (4,7)=76$. There are no more feasible and competitive paths for this O/D pair. Then, $\ell_{\mathcal{S}}(w_3)= \min\{76,80\}=76$ and it is covered.
    \item $d'_{\mathcal{S}}(w_4)=\beta\, d_{\mathcal{N}}(8,3)+d_{\mathcal{S}}(3,4)=57$. There are no more feasible edges to access the metro network because the only existing edge that connects the centroid to the metro network, apart from $(8,3)$ is $(8,2)$ but $\beta\,d_{(8,2)}>k$. Then, $\ell_{\mathcal{S}}(w_4)= \min\{57,80\}=57$ and it is covered.
\end{itemize}
In this situation, finally, pairs $w_3$ and $w_4$ are covered by the metro network by combining the pedestrian and the metro network modes. That is, they can access the metro network by walking, and the total distance from each origin to each destination is lower than its associated utility.

\begin{figure}[ht!]
    \definecolor{ududff}{rgb}{0.30196078431372547,0.30196078431372547,1}
\begin{tikzpicture}
\draw [line width=4pt] (-6.3,2.36)-- (-7.36,1.26);
\draw [line width=4pt] (-7.36,1.26)-- (-5.54,0.04);
\draw [line width=4pt] (-5.54,0.04)-- (-3.8,0.44);
\draw [line width=4pt] (-3.8,0.44)-- (-6.3,2.36);
\draw [line width=2pt] (-6.3,2.36)-- (-7.3,4.16);
\draw [line width=2pt] (-7.3,4.16)-- (-5.1,4.7);
\draw [line width=2pt] (-5.1,4.7)-- (-6.3,2.36);
\draw [line width=2pt] (-6.3,2.36)-- (-8.78,2.74);
\draw [line width=2pt,dash pattern=on 1pt off 1pt] (-8.78,2.74)-- (-7.36,1.26);
\draw [line width=2pt] (-7.36,1.26)-- (-8.5,0.06);
\draw [line width=2pt] (-8.5,0.06)-- (-9.76,1.8);
\draw [line width=2pt] (-9.76,1.8)-- (-8.78,2.74);
\draw [line width=2pt] (-8.78,2.74)-- (-7.3,4.16);
\draw [line width=2pt] (-8.5,0.06)-- (-10.52,-0.94);
\draw [line width=2pt] (-10.52,-0.94)-- (-8.64,-2.08);
\draw [line width=2pt] (-8.64,-2.08)-- (-8.5,0.06);
\draw [line width=2pt] (-8.5,0.06)-- (-7.1,-1);
\draw [line width=2pt] (-7.1,-1)-- (-4.88,-1.46);
\draw [line width=2pt,dash pattern=on 1pt off 1pt] (-4.88,-1.46)-- (-5.54,0.04);
\draw [line width=2pt] (-3.8,0.44)-- (-2.52,-1.08);
\draw [line width=2pt] (-2.52,-1.08)-- (-4.88,-1.46);
\draw [line width=2pt] (-4.88,-1.46)-- (-5.74,-2.86);
\draw [line width=2pt] (-4.88,-1.46)-- (-3.76,-2.78);
\draw [line width=2pt,dash pattern=on 1pt off 1pt] (-3.8,0.44)-- (-3.34,2.52);
\draw [line width=2pt] (-3.34,2.52)-- (-6.3,2.36);
\draw [line width=2pt] (-5.1,4.7)-- (-3.34,2.52);
\draw [line width=2pt] (-3.34,2.52)-- (-1.06,2.68);
\draw [line width=2pt] (-1.06,2.68)-- (-2.52,-1.08);
\draw [line width=2pt] (-2.52,-1.08)-- (0.46,0.04);
\draw [line width=2pt] (0.46,0.04)-- (1.3,1.3);
\begin{scriptsize}
\draw [fill=ududff] (-6.3,2.36) circle (2.5pt);
\draw[color=ududff] (-6.14,1.89) node {$2$};
\draw [fill=ududff] (-7.36,1.26) circle (2.5pt);
\draw[color=ududff] (-7.3,1.66) node {$5$};
\draw[color=black] (-7,2.) node {$27$};
\draw [fill=ududff] (-5.54,0.04) circle (2.5pt);
\draw[color=ududff] (-5.38,0.47) node {$4$};
\draw[color=black] (-6.6,0.35) node {$30$};
\draw [fill=ududff] (-3.8,0.44) circle (2.5pt);
\draw[color=ududff] (-3.9,0.87) node {$3$};
\draw[color=black] (-4.52,0.08) node {$25$};
\draw[color=black] (-4.9,1.7) node {$60$};
\draw [fill=ududff] (-7.3,4.16) circle (2.5pt);
\draw[color=ududff] (-7.14,4.59) node {$1$};
\draw[color=black] (-6.7,3.65) node {$22$};
\draw [fill=ududff] (-5.1,4.7) circle (2.5pt);
\draw[color=ududff] (-4.94,5.13) node {$20$};
\draw[color=black] (-6.1,4.7) node {$25$};
\draw[color=black] (-5.89,3.91) node {$37$};
\draw [fill=ududff] (-8.78,2.74) circle (2.5pt);
\draw[color=ududff] (-8.62,3.17) node {$6$};
\draw[color=black] (-7.42,2.8) node {$14$};
\draw[color=black] (-8.35,2.01) node {$10$};
\draw [fill=ududff] (-8.5,0.06) circle (2.5pt);
\draw[color=ududff] (-8.4,0.49) node {$18$};
\draw[color=black] (-8,1.) node {$25$};
\draw [fill=ududff] (-9.76,1.8) circle (2.5pt);
\draw[color=ududff] (-9.75,2.23) node {$19$};
\draw[color=black] (-8.9,1.1) node {$24$};
\draw[color=black] (-8.98,2.2) node {$21$};
\draw[color=black] (-7.74,3.45) node {$40$};
\draw [fill=ududff] (-10.52,-0.94) circle (2.5pt);
\draw[color=ududff] (-10.36,-0.51) node {$16$};
\draw[color=black] (-9.58,-0.25) node {$26$};
\draw [fill=ududff] (-8.64,-2.08) circle (2.5pt);
\draw[color=ududff] (-8.3,-1.7) node {$15$};
\draw[color=black] (-9.68,-1.7) node {$35$};
\draw[color=black] (-8.3,-0.83) node {$29$};
\draw [fill=ududff] (-7.1,-1) circle (2.5pt);
\draw[color=ududff] (-6.94,-0.57) node {$17$};
\draw[color=black] (-7.75,-0.3) node {$23$};
\draw [fill=ududff] (-4.88,-1.46) circle (2.5pt);
\draw[color=ududff] (-4.72,-1.03) node {$7$};
\draw[color=black] (-6.5,-1.4) node {$23$};
\draw[color=black] (-5.1,-0.35) node {$13$};
\draw [fill=ududff] (-2.52,-1.08) circle (2.5pt);
\draw[color=ududff] (-2.36,-1.35) node {$10$};
\draw[color=black] (-2.9,-0.23) node {$26$};
\draw[color=black] (-3.62,-0.95) node {$23$};
\draw [fill=ududff] (-5.74,-2.86) circle (2.5pt);
\draw[color=ududff] (-6.1,-3) node {$13$};
\draw[color=black] (-5.46,-2) node {$37$};
\draw [fill=ududff] (-3.76,-2.78) circle (2.5pt);
\draw[color=ududff] (-3.5,-3) node {$14$};
\draw[color=black] (-4,-2) node {$21$};
\draw [fill=ududff] (-3.34,2.52) circle (2.5pt);
\draw[color=ududff] (-3.18,2.95) node {$8$};
\draw[color=black] (-3.2,1.71) node {$16$};
\draw[color=black] (-5.1,2.65) node {$38$};
\draw[color=black] (-4,3.71) node {$30$};
\draw [fill=ududff] (-1.06,2.68) circle (2.5pt);
\draw[color=ududff] (-0.9,3.11) node {$9$};
\draw[color=black] (-1.9,2.8) node {$31$};
\draw[color=black] (-1.96,1.21) node {$48$};
\draw [fill=ududff] (0.46,0.04) circle (2.5pt);
\draw[color=ududff] (0.4,0.47) node {$11$};
\draw[color=black] (-1.2,-0.8) node {$25$};
\draw [fill=ududff] (1.3,1.3) circle (2.5pt);
\draw[color=ududff] (1.46,1.73) node {$12$};
\draw[color=black] (1.15,0.6) node {$21$};
\end{scriptsize}
\end{tikzpicture}
    \caption{Example of accessibility to a metro network.}
    \label{fig:application}
\end{figure}

\section{Conclusions}\label{sec:conclusions} 

In this paper, we introduced and studied the $\lambda$-cent-dian and the generalized center problems in network design for the first time. Both problems aim to minimize a linear combination of the maximum and average traveled distances. The $\lambda$ cent-dian problem, where $\lambda\in [0,1]$, minimizes a convex combination of these objectives, while the generalized center problem minimizes the difference between them. We explored these concepts under two versions of Pareto-optimality: the first version considers the shortest paths of each origin/destination (O/D) pair, while the second version addresses both objective functions simultaneously. Regarding the second version, we found that the introduction of the new concept of maximum $\lambda$ cent-dian is necessary to generate the entire set of Pareto-optimal solutions. We include an example that shows the applicability of the mathematical models we have studied in this paper, thus completing the twofold contribution of this paper.

\section{Compliance with Ethical Standards}

This work is partially supported by Ministerio de Ciencia e Innovaci\'on under grant PID2020-114594GB-C21 funded by MICIU/AEI/10.13039/501100011033, grant US-1381656 funded by Programa Operativo FEDER/Andalucía and grant ANID PIA AFB230002 funded by the Instituto Sistemas Complejos de Ingenier\'ia.

All authors declare that they have no conflict of interest.

This article does not contain any studies with human participants or animals performed by any of the authors.

\bibliography{sn-bibliography}% common bib file
%% if required, the content of .bbl file can be included here once bbl is generated
%\input sn-article.bbl

\end{document}